\documentclass{article}

\usepackage{amssymb}

\newcommand{\proof}{{\bf Proof:  }}

\newtheorem{theorem}{Theorem}[section]
\newtheorem{lemma}[theorem]{Lemma}
\newtheorem{definition}[theorem]{Definition}
\newtheorem{proposition}[theorem]{Proposition}
\newtheorem{corollary}[theorem]{Corollary}

\begin{document}

\parindent0pt
\title{\bf Quiver moduli and small desingularizations of some GIT quotients}
\author{Markus Reineke\\ School of Mathematics and Natural Sciences\\ Bergische Universit\"at Wuppertal}

\maketitle
\begin{abstract}We construct small desingularizations of moduli spaces of semistable quiver representations for indivisible dimension vectors using deformations of stabilites and a dimension estimate for nullcones. We apply this construction to several classes of GIT quotients.\end{abstract}

\section{Introduction}\label{s1}

In the introduction of \cite{FGIK}, J. Bernstein is cited: {\it I'd say that if you can compute the Poincar\'e polynomial
for intersection cohomology without a computer then,
probably, there is a small resolution which gives it.}\\[1ex]
For moduli spaces of semistable quiver representations of fixed dimension vector (under certain, rather natural, conditions), the Poincar\'e polynomial for intersection cohomology is identified with the quantized Donaldson-Thomas invariants for quivers with stability in \cite{MeR}; these invariants are defined in terms of a recursive, but nevertheless explicit, formula. It is thus natural, in light of J. Bernstein's principle, to search for small desingularizations of quiver moduli.\\[1ex]
As a first step in this direction, the main result of the present paper, Theorem \ref{maintheorem}, exhibits small desingularizations of moduli spaces of semistable representations for indivisible dimension vectors.\\[1ex]
In fact, the desingularization itself is a moduli space of semistable representations of the same quiver, but with respect to a deformed stability function. The fibres of the desingularization map can be described in terms of moduli spaces of representations for so-called local quivers, essentially using Luna's slice theorem, a technique going back to \cite{LBPSS}. Smallness is then proven using a dimension estimate for nullcones of quiver representations which already played a crucial role in \cite{MeR}.\\[1ex]
Although the indivisibility assumption looks very restrictive from the quiver point of view, our construction nevertheless yields a large class of small resolutions for rather classical GIT quotients, for example moduli spaces of ordered point configurations in projective spaces.\\[2ex]
In Section \ref{s2}, we first review in some detail the necessary background on quiver representations, their geometry and invariant theory, the construction of moduli of semistable representations, and formulas for their (intersection) cohomology. In Section \ref{s3}, the desingularizations are constructed, and their fibres are described. In Section \ref{s4}, after deriving the above mentioned dimension estimate for nullcones, the main result is proven and its cohomological consequences are discussed. Section \ref{s5} explores the generality under which our main result is applicable. In Section \ref{s6}, several example classes of GIT quotients for which small desingularizations can be constructed with the present methods are discussed:  certain determinantal varieties, moduli spaces of point configurations in projective spaces, certain classes of quotients by Levi actions, and the toric quiver moduli of \cite{MoR}.\\[2ex]
{\bf Acknowledgments:} The author would like to thank L. Le Bruyn and S. Mozgovoy for interesting discussions on the material developed here.

\section{Quiver representations and their moduli}\label{recollections}\label{s2}

In this section, we first review basic facts on quivers and their representations, followed by a discussion of the invariant theory of quivers following \cite{LBPSS}. We then review the construction of and basic facts on moduli spaces of (semi-)stable representations of quivers. Finally, we discuss formulas for the (intersection) Betti numbers of quiver moduli.  As a general reference for this material, the reader is referred to e.g. \cite{LBBuch,RModuli}.

\subsection{Quivers and their representations}\label{s21}

Let $Q$ be a finite quiver with (finite) set of vertices $Q_0$ and (finitely many) arrows $\alpha:i\rightarrow j$. The quiver $Q$ is allowed to have oriented cycles (and will typically have in many of the examples below). Let $\Lambda={\bf Z}Q_0$ be the free abelian group generated by the vertices of $Q$, define the ${\bf Q}$-vector space $\Lambda_{\bf Q}={\bf Q}\otimes_{\bf Z}\Lambda$, and let $\Lambda^+={\bf N}Q_0$ be the sub-semigroup of $\Lambda$ generated by $Q_0$. The elements of $\Lambda^+$ are viewed as dimension vectors for $Q$, and are written as ${\bf d}=\sum_{i\in Q_0}d_ii$. On $\Lambda$, we have the Euler form $\langle\_,\_\rangle$ of the quiver, given by
$$\langle{\bf d},{\bf e}\rangle=\sum_{i\in Q_0}d_ie_i-\sum_{\alpha:i\rightarrow j}d_ie_j.$$

We denote by ${\rm rep}_{\bf C}(Q)$ the abelian ${\bf C}$-linear category of finite-dimensional complex representations of $Q$; its objects are thus of the form $$V=((V_i)_{i\in Q_0},(f_\alpha:V_i\rightarrow V_j)_{\alpha:i\rightarrow j})$$
for finite-dimensional ${\bf C}$-vector spaces $V_i$ and ${\bf C}$-linear maps $f_\alpha$. We denote by $${\bf dim} V=\sum_{i\in Q_0}\dim V_i\cdot i\in\Lambda^+$$
the dimension vector of $V$. The Euler form as defined above is then precisely the homological Euler form of ${\rm rep}_{\bf C}(Q)$, that is,
$$\dim{\rm Hom}(V,W)-\dim{\rm Ext}^1(V,W)=\langle{\bf dim} V,{\bf dim} W\rangle$$
for all representations $V$ and $W$, whereas ${\rm Ext}^i(V,W)=0$ for all $i\geq 2$.

\subsection{Geometry and invariant theory of quivers}\label{s22}

Let
$$R_{\bf d}(Q)=\bigoplus_{\alpha:i\rightarrow j}{\rm Hom}_{\bf C}({\bf C}^{d_i},{\bf C}^{d_j})$$ be the variety of complex representations of $Q$ of dimension vector ${\bf d}\in\Lambda^+$, on which the structure group $$G_{\bf d}=\prod_{i\in Q_0}{\rm GL}({\bf C}^{d_i})$$ acts via base change:
$$(g_i)_i\cdot(f_\alpha)_\alpha=(g_jf_\alpha g_i^{-1})_{\alpha:i\rightarrow j}.$$

The $G_{\bf d}$-orbits in $R_{\bf d}(Q)$ thus correspond bijectively to the isomorphism classes of representations of $Q$ of dimension vector ${\bf d}$. The orbit $G_{\bf d}V$ of a (point corresponding to a) representation $V$ in $R_{\bf d}(Q)$ is closed if and only if the representation $V$ is semisimple, by a direct application of the Hilbert criterion. \\[1ex]
The quotient

$$M_{\bf d}^{\rm ssimp}(Q)=R_{\bf d}(Q)//G_{\bf d}={\rm Spec}({\bf C}[R_{\bf d}(Q)]^{G_{\bf d}})$$
is given as the spectrum of the ring of $G_{\bf d}$-invariant polynomial functions on $R_{\bf d}(Q)$. Let $\pi:R_{\bf d}(Q)\rightarrow M_{\bf d}^{\rm ssimp}(Q)$ be the quotient map.\\[1ex]
As the invariant-theoretic quotient parametrizes closed orbits, $M_{\bf d}^{\rm ssimp}(Q)$ is an affine variety parametrizing isomorphism classes of semisimple representations of $Q$ of dimension vector ${\bf d}$. If the quiver $Q$ is acyclic, the only simple representations are the one-dimensional representations $S_i$ supported on a vertex $i\in Q_0$, thus the only semisimple representation of dimension vector ${\bf d}$ is $\bigoplus_iS_i^{d_i}$. Thus $M_{\bf d}^{\rm ssimp}(Q)$ reduces to a single point if $Q$ is acyclic.\\[1ex]
If there exists a simple representation of $Q$ of dimension vector ${\bf d}$ (an efficient numerical criterion for this is derived in \cite{LBPSS}), the variety $M_{\bf d}^{\rm ssimp}(Q)$ has dimension $1-\langle {\bf d},{\bf d}\rangle$.\\[1ex]
There is an additional action of the multiplicative group ${\bf C}^*$ on $R_{\bf d}(Q)$ by rescaling the linear maps representing the arrows, which commutes with the $G_{\bf d}$-action. Thus, this action descends to an action on $M_{\bf d}^{\rm ssimp}(Q)$, which is thus a cone whose vertex corresponds to the representation $\bigoplus_iS_i^{d_i}$ in which all arrows are represented by zero maps. \\[1ex]
Coordinates for $M_{\bf d}^{\rm ssimp}(Q)$ are provided by generators of the invariant ring ${\bf C}[R_{\bf d}(Q)]^{G_{\bf d}}$. It is known that this ring is generated by traces along oriented cycles. More precisely, let $$c:i_0\stackrel{\alpha_1}{\rightarrow}i_1\stackrel{\alpha_2}{\rightarrow}\ldots\stackrel{\alpha_s}{\rightarrow}i_s=i_0$$
be an oriented cycle in $Q$. Then the function
$$t_c(V):={\rm trace}(f_{\alpha_s}\circ\ldots\circ f_{\alpha_1})$$
is $G_{\bf d}$-invariant, and all these functions generate the invariant ring (in fact, it suffices to consider oriented cycles of length at most $(\dim{\bf d})^2$ by \cite{LBPSS}).\\[1ex]
This description of the invariant ring also allows us to determine the zero fibre $\pi^{-1}(0)$ of the quotient map, that is, the nullcone $N_{\bf d}(Q)$ of $R_{\bf d}(Q)$: it consists of all representations which are nilpotent in the sense that every oriented cycle is represented by a nilpotent linear map.\\[1ex] 
More generally, all fibres of the quotient can be determined explicitly using Luna's slice theorem, as explained in \cite{LBPSS}. This will be reviewed in more detail since it will be crucial in the derivation of the main result of this paper.\\[1ex]
Let us first recall the Luna stratification of $M_{\bf d}^{\rm ssimp}(Q)$: a point $V$ of this moduli space corresponds to an isomorphism class of a semisimple representation, again denoted by $V$, of $Q$ of dimension vector ${\bf d}$. Thus $V$ is isomorphic to a direct sum $U_1^{m_1}\oplus\ldots\oplus U_s^{m_s}$ of pairwise non-isomorphic simple representations of. The decomposition type of $V$ is defined as the tuple
$$\xi=({\rm\bf dim} U_1,m_1),\ldots,{\rm\bf dim} U_s,m_s)).$$
Conversely, for a tuple $\xi=(({\bf d}^1,m_1),\ldots,{\bf d}^s,m_s))\in(\Lambda^+\times{\bf N})^s$ such that $\sum_km_k{\bf d}^k={\bf d}$, we define $S_\xi$ as the set of all points of $M_{\bf d}^{\rm ssimp}(Q)$ of decomposition type $\xi$, which is a locally closed subset of $M_{\bf d}^{\rm ssimp}(Q)$ by \cite{LBPSS}. It is known that the quotient map $R_{\bf d}(Q)\rightarrow M_{\bf d}^{\rm ssimp}(Q)$ is \'etale locally trivial over every Luna stratum \cite{LBPSS}. The dense stratum is the locus of simple representations in $M_{\bf d}^{\rm ssimp}(Q)$; it corresponds to the trivial decomposition type $(({\bf d},1))$.\\[1ex]
To a decomposition type $\xi$ as above, we associate the quiver $Q_\xi$ with vertices $i_1,\ldots,i_s$. The number of arrows from $i_k$ to $i_l$ is given as $\delta_{k,l}-\langle {\bf d}^k,{\bf d}^l\rangle$. We define a dimension vector ${\bf d}_\xi$ for $Q_\xi$ by ${\bf d}_\xi=\sum_{k=1}^sm_ki_k$.\\[1ex]
We then have the following description of the fibre over a point $V\simeq\bigoplus_{k}U_k^{m_k}$ in $S_\xi$:

$$\pi^{-1}(V)\simeq G_{\bf d}\times^{G_{{\bf d}_\xi}}N_{{\bf d}_\xi}(Q_\xi).$$

In representation-theoretic terms, this isomorphism corresponds to Ringel simplification \cite{Ringel}: we consider the subcategory $\mathcal{C}$ of ${\rm rep}_{\bf C}(Q)$ of representations admitting a Jordan-H\"older filtration by the simple representations $U_1,\ldots,U_s$. This is an abelian subcategory whose simple objects are the $U_k$. Moreover (see \cite{DengXiao} for a detailed proof), the category $\mathcal{C}$ is equivalent to the subcategory of ${\rm rep}_{\bf C}(Q_\xi)$ of representations admitting a Jordan-H\"older filtration by the simple representations $S_{i_1},\ldots,S_{i_s}$ of $Q_\xi$, that is, to the subcategory of nilpotent representations of $Q_\xi$.

\subsection{Stability}\label{s23}

We denote by $\Lambda^*={\rm Hom}(\Lambda,{\bf Z})$ the group of linear functions on $\Lambda$ with its natural basis elements $i^*$ for $i\in Q_0$. We view an element $\Theta$ of $\Lambda^+$ as a stability for $Q$, and associate to it a slope function $\mu:\Lambda^+\setminus\{0\}\rightarrow{\bf Q}$ via
$$\mu({\bf d})=\frac{\Theta({\bf d})}{\dim {\bf d}},$$
where $\dim\in\Lambda^*$ denotes the function given by $\dim {\bf d}=\sum_{i\in Q_0}d_i$. For a fixed slope $\mu\in{\bf Q}$, define $\Lambda_\mu^+$ as the subset of $\Lambda^+$ of all dimension vectors ${\bf d}\in\Lambda^+$ of slope $\mu$.\\[1ex]
The slope of a non-zero representation $V$ is defined as the slope of its dimension vector, $\mu(V)=\mu({\bf dim}V)$. The representation $V$ is called $\Theta$-semistable if $\mu(U)\leq\mu(V)$ for all non-zero subrepresentations $U\subset V$, and it is called $\Theta$-stable if $\mu(U)<\mu(V)$ for all non-zero proper subrepresentations.\\[1ex]
The semistable representations of a fixed slope $\mu\in {\bf Q}$ form an abelian subcategory ${\rm rep}_{\bf C}^\mu(Q)$, whose simple objects are the stable representations of slope $\mu$. In particular, every object in ${\rm rep}_{\bf C}^\mu(Q)$ thus admits a
relative Jordan-H\"older filtration with stable subquotients. For stable representations $V$ and $W$ of the same slope, we have ${\rm Hom}(V,W)=0$ unless $V$ and $W$ are isomorphic, and ${\rm End}(V)\simeq {\bf C}$. We call a representation polystable of slope $\mu$ if it is a semisimple object in ${\rm rep}_{\bf C}^\mu(Q)$, that is, a direct sum of stable representations of the same slope $\mu$.\\[1ex]
For slopes $\mu,\nu\in{\bf Q}$ such that $\mu<\nu$, we have ${\rm Hom}({\rm rep}_{\bf C}^\nu(Q),{\rm rep}_{\bf C}^\mu(Q))=0$. Every object $V$ of ${\rm rep}_{\bf C}(Q)$ admits a unique Harder-Narasimhan filtration, that is, a filtration
$$0=V_0\subset V_1\subset\ldots\subset V_k=V$$
such that all $V_k/V_{k-1}$ are semistable, and
$$\mu(V_1/V_0)>\mu(V_2/V_1)>\ldots>\mu(V_k/V_{k-1}).$$

We note the following invariance property of stabilities: replacing $\Theta$ by $\widetilde{\Theta}=x\cdot\Theta+y\cdot\dim$ for $x\in{\bf Q}^+$ and $y\in{\bf Z}$ does not change the class of (semi-)stable representations, since $\widetilde{\mu}({\bf d})=\mu(d)+x$ for the associated slope functions. In particular, in considering (semi-)stable representations for a fixed dimension vector ${\bf d}$, we can, and will, always assume $\Theta({\bf d})=0$ without loss of generality.

\subsection{Quiver moduli}\label{s24}

Fix a stability $\Theta\in \Lambda^*$, and assume without loss of generality that $\Theta({\bf d})=0$. Let $R_{\bf d}^{\Theta-\rm sst}(Q)$ be the semistable locus in $R_{\bf d}(Q)$, that is, the (open) subset corresponding to semistable representations,  and let $R_{\bf d}^{\Theta-\rm st}(Q)$ be the stable locus in $R_d(Q)$, again an open subset. We consider the character $\chi:G_{\bf d}\rightarrow{\bf C}^*$ given by
$$\chi((g_i)_{i\in Q_0})=\prod_{i\in Q_0}\det(g_i)^{-\Theta_i}.$$
The ring of $\chi$-semi-invariants is the positively graded ring
$${\bf C}[R_{\bf d}(Q)]^{G_{\bf d}}_{\chi}=\bigoplus_{n\geq 0}{\bf C}[R_{\bf d}(Q)]^{G_{\bf d},\chi^n},$$
where ${\bf C}[R_{\bf d}(Q)]^{G_{\bf d},\chi^n}$ consists of polynomial functions $f\in{\bf C}[R_{\bf d}]$ such that
$$f(gV)=\chi(g)^nf(V)\mbox{ for all }g\in G_{\bf d}, V\in R_{\bf d}(Q).$$
We define the moduli space of semistable representations of $Q$ of dimension vector ${\bf d}$ as
$$M_{\bf d}^{\Theta-\rm sst}(Q)=R_{\bf d}^{\rm sst}//G_{\bf d}={\rm Proj}({\bf C}[R_{\bf d}(Q)]^{G_{\bf d}}_\chi).$$

On the open subset $R_{\bf d}^{\Theta-\rm st}$, the quotient map is a geometric quotient, so the variety $M_{\bf d}^{\Theta-\rm sst}(Q)$ contains an open subset $$M_{\bf d}^{\Theta-\rm st}(Q)=R_{\bf d}^{\Theta-\rm st}/G_{\bf d},$$ the moduli space of stable representations of $Q$ of dimension vector ${\bf d}$. More precisely, the quotient
$$R_{\bf d}^{\Theta-\rm st}(Q)\rightarrow M_{\bf d}^{\Theta-\rm st}(Q)$$
is a $PG_{\bf d}$-principal bundle, where $PG_{\bf d}$ is the quotient of $G_{\bf d}$ by the diagonally embedded scalars ${\bf C}^*$ (which of course act trivially on $R_{\bf d}(Q)$).\\[1ex]
The points of the moduli space $M_{\bf d}^{\Theta-\rm st}(Q)$ parametrize isomorphism classes of stable representations of $Q$ of dimension vector ${\bf d}$, whereas the points of the moduli space $M_{\bf d}^{\Theta-\rm sst}(Q)$  parametrize isomorphism classes of polystable representations of $Q$ of dimension vector ${\bf d}$ (since these correspond to the closed $G_{\bf d}$-orbits in $R_{\bf d}^{\Theta-\rm sst}(Q)$).  Whereas $M_{\bf d}^{\Theta-\rm st}(Q)$ is a smooth variety of dimension $1-\langle{\bf d},{\bf d}\rangle$ if non-empty, the variety $M_{\bf d}^{\Theta-\rm sst}(Q)$ is typically singular.\\[1ex] 
In the special case $\Theta=0$, every representation is semistable, and the stable representations are precisely the simple ones, thus $M_{\bf d}^{0-\rm sst}(Q)=M_{\bf d}^{\rm ssimp}(Q)$.\\[1ex]
By definition, we have a natural projective morphism
$$p:M_{\bf d}^{\Theta-\rm sst}(Q)\rightarrow M_{\bf d}^{\rm ssimp}(Q);$$
in particular, $M_{\bf d}^{\Theta-\rm sst}(Q)$ is projective if $Q$ is acyclic. In the general case, we define
$$M_{\bf d}^{\Theta-\rm sst,nilp}(Q)=p^{-1}(0)$$
as the moduli space of nilpotent $\Theta$-semistable representations of $Q$ of dimension vector ${\bf d}$. This is a projective, but typically singular, variety; alternatively, it can be realized as
$$M_{\bf d}^{\Theta-\rm sst,nilp}(Q)=(N_{\bf d}(Q)\cap R_{\bf d}^{\Theta-\rm sst}(Q))//G_{\bf d}.$$

We call a dimension vector ${\bf d}\in\Lambda^+$ 
\begin{itemize}
\item indivisible if $\gcd(d_i\, :\, i\in Q_0)=1$, and
\item $\Theta$-coprime if $\Theta({\bf e})\not=\Theta({\bf d})$ for all $0\not={\bf e}\lneqq{\bf d}$.
\end{itemize}

\begin{lemma} If ${\bf d}$ is $\Theta$-coprime, it is indivisible. If ${\bf d}$ is indivisible, then it is $\Theta$-coprime for sufficiently generic $\Theta$.
\end{lemma}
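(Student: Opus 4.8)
The plan is to reformulate $\Theta$-coprimality through the slope function. Under the normalization $\Theta({\bf d})=0$ in force throughout this subsection, ${\bf d}$ is $\Theta$-coprime exactly when $\mu({\bf e})\neq\mu({\bf d})$ for every ${\bf e}$ with $0\neq{\bf e}\lneqq{\bf d}$; this reformulated condition is visibly unchanged under the substitutions $\Theta\mapsto x\Theta+y\dim$, so ``sufficiently generic'' is a meaningful notion for it. Both halves of the lemma then become elementary statements about the finite set $E=\{{\bf e}\in\Lambda^+\ :\ 0\neq{\bf e}\lneqq{\bf d}\}$. For the first implication I would argue by contraposition: if ${\bf d}$ is not indivisible, set $g=\gcd(d_i\ :\ i\in Q_0)\geq 2$ and ${\bf e}=\frac{1}{g}{\bf d}\in\Lambda^+$; then ${\bf e}\in E$, and since the slope is invariant under positive rescaling of the dimension vector we have $\mu({\bf e})=\mu({\bf d})$, so ${\bf d}$ fails to be $\Theta$-coprime for every stability $\Theta$.

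For the converse, fix an indivisible ${\bf d}$ and, for each ${\bf e}\in E$, introduce the vector
$${\bf v}_{\bf e}=(\dim{\bf d})\,{\bf e}-(\dim{\bf e})\,{\bf d}\ \in\ \Lambda.$$
Clearing denominators in the equation $\mu({\bf e})=\mu({\bf d})$ shows that a stability $\Theta$ violates the coprimality condition for this particular ${\bf e}$ precisely when $\Theta({\bf v}_{\bf e})=0$. The key point is that ${\bf v}_{\bf e}\neq 0$: were ${\bf v}_{\bf e}=0$, we would have ${\bf e}=\lambda{\bf d}$ with $\lambda=(\dim{\bf e})/(\dim{\bf d})\in(0,1)\cap{\bf Q}$, and writing $\lambda=p/q$ in lowest terms with $q\geq 2$ forces $q\mid d_i$ for all $i$, contradicting indivisibility. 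Hence each locus $H_{\bf e}=\{f\in{\bf Q}\otimes_{\bf Z}\Lambda^*\ :\ f({\bf v}_{\bf e})=0\}$ is a proper rational hyperplane, and $E$ being finite there are only finitely many of them; any integral stability $\Theta\in\Lambda^*$ not lying on any $H_{\bf e}$ --- and such $\Theta$ manifestly exist, since the lattice $\Lambda^*$ is not covered by finitely many proper rational hyperplanes --- is then $\Theta$-coprime for ${\bf d}$ (after replacing it by $x\Theta+y\dim$ normalized so that $\Theta({\bf d})=0$, if one insists on the normalization).

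I expect the only genuine subtlety to be the verification that ${\bf v}_{\bf e}\neq 0$, which is precisely the step where the indivisibility hypothesis is used; the remainder is the standard observation that a sufficiently general lattice point avoids a finite union of proper hyperplanes.
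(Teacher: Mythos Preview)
Your proof is correct and follows essentially the same strategy as the paper's: contraposition for the first implication (exhibiting ${\bf e}={\bf d}/g$ when $g\geq 2$), and a finite hyperplane-avoidance argument for the second. The only difference is cosmetic --- the paper works directly inside the normalized hyperplane $\{\Theta:\Theta({\bf d})=0\}$ and asserts the sub-hyperplanes $\{\Theta({\bf e})=0\}$ are non-trivial, whereas you work with slopes in all of $\Lambda^*_{\bf Q}$ via the vectors ${\bf v}_{\bf e}$ and are more explicit about why these hyperplanes are proper.
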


\proof Suppose ${\bf d}=k{\bf e}$ for some $k\geq 2$. Then $0=\Theta({\bf d})=k\Theta({\bf e})$, proving the first claim. For ${\bf d}$ indivisible, the conditions $\Theta({\bf e})=0$ for $0\not={\bf e}\lneqq{\bf d}$ cut out finitely many non-trivial hyperplanes in the space of all $\Theta$ such that $\Theta({\bf d})=0$, thus a choice of $\Theta$ in the complement of these hyperplanes defines a stability for which ${\bf d}$ is $\Theta$-coprime.\\[1ex]
If ${\bf d}$ is $\Theta$-coprime, we thus have $R_{\bf d}^{\Theta-\rm st}(Q)=R_{\bf d}^{\Theta-\rm sst}(Q)$, thus $M_{\bf d}^{\Theta-\rm st}(Q)=M_{\bf d}^{\Theta-\rm sst}(Q)$. Thus $M_{\bf d}^{\Theta-\rm sst}(Q)$ is smooth and projective, of dimension $1-\langle{\bf d},{\bf d}\rangle$, over the affine variety $M_{\bf d}^{\rm ssimp}(Q)$.

\subsection{Cohomology and DT invariants}\label{s25}

We continue to use the setup of the previous section, so let $Q$ be an arbitrary finite quiver, $\Theta$ a stability for $Q$, and ${\bf d}$ a dimension vector.\\[1ex]
We define rational functions $p_{\bf d}(q)\in{\bf Q}(q)$ by
$$p_d(q)=\sum_{{\bf d}^*}(-1)^{s-1}q^{-\sum_{k\leq l}\langle{\bf d}^l,{\bf d}^k\rangle}\prod_{k=1}^s\prod_{i\in Q_0}\prod_{j=1}^{d^k_i}(1-q^{-j})^{-1},$$
where the sum ranges over all ordered decompositions ${\bf d}={\bf d}^1+\ldots+{\bf d}^s$ of ${\bf d}$ into non-zero dimension vectors such that
$$\mu({\bf d}^1+\ldots+{\bf d}^k)>\mu({\bf d})$$
for all $k<s$.\\[1ex]
With the aid of these functions (arising from a resolved Harder-Narasimhan recursion, see \cite{RHN}), we can determine the Betti numbers in cohomology with compact support of the moduli spaces $M_{\bf d}^{\Theta-\rm sst}(Q)$ in the $\Theta$-coprime case:

\begin{theorem}\label{bettiformula} If ${\bf d}$ is $\Theta$-coprime, we have
$$\sum_i\dim H^i_c(M_{\bf d}^{\Theta-\rm sst}(Q),{\bf Q})(-q^{1/2})^i=(q-1)p_{\bf d}(q).$$
\end{theorem}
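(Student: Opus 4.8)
The plan is to deduce the Betti number formula from a cell-like decomposition of the moduli space arising from the Harder--Narasimhan stratification, combined with a count of rational points over finite fields and the Weil conjectures. First I would work over a finite field $\mathbf{F}_q$ (or rather, over $\mathbf{Z}$, base-changed to $\mathbf{F}_q$): the quiver $Q$, the dimension vector $\mathbf{d}$, the stability $\Theta$ and the varieties $R_{\mathbf{d}}(Q)$, $R_{\mathbf{d}}^{\Theta\mathrm{-sst}}(Q)$, $G_{\mathbf{d}}$ are all defined over $\mathbf{Z}$ in an obvious way, so this is harmless. The key input is the Harder--Narasimhan recursion: every $\mathbf{F}_q$-point of $R_{\mathbf{d}}(Q)$ has a unique HN filtration, so $R_{\mathbf{d}}(Q)(\mathbf{F}_q)$ decomposes as a disjoint union over ordered decompositions $\mathbf{d}=\mathbf{d}^1+\ldots+\mathbf{d}^s$ with strictly decreasing slopes, of the sets of representations with HN type $(\mathbf{d}^1,\ldots,\mathbf{d}^s)$; counting these using the fact that each stratum fibres over a product of semistable loci with affine-space fibres of dimension $-\sum_{k<l}\langle\mathbf{d}^l,\mathbf{d}^k\rangle$ gives an identity, in the completed Hall-type algebra, expressing $|R_{\mathbf{d}}(Q)(\mathbf{F}_q)|/|G_{\mathbf{d}}(\mathbf{F}_q)|$ in terms of the quantities $|R_{\mathbf{e}}^{\Theta\mathrm{-sst}}(Q)(\mathbf{F}_q)|/|G_{\mathbf{e}}(\mathbf{F}_q)|$ over subvectors $\mathbf{e}$ of slope $\mu(\mathbf{d})$.

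Next I would invert this recursion. Since $|R_{\mathbf{d}}(Q)(\mathbf{F}_q)|=q^{\dim R_{\mathbf{d}}(Q)}=q^{\sum_{\alpha:i\to j}d_id_j}$ and $|G_{\mathbf{d}}(\mathbf{F}_q)|=\prod_{i\in Q_0}\prod_{j=1}^{d_i}(q^{d_i}-q^{j-1})$, both sides are explicit rational functions of $q$. Solving the recursion — this is exactly the ``resolved Harder--Narasimhan recursion'' of \cite{RHN} — yields
$$\frac{|R_{\mathbf{d}}^{\Theta\mathrm{-sst}}(Q)(\mathbf{F}_q)|}{|G_{\mathbf{d}}(\mathbf{F}_q)|}=\sum_{\mathbf{d}^*}(-1)^{s-1}q^{-\sum_{k\leq l}\langle\mathbf{d}^l,\mathbf{d}^k\rangle}\prod_{k=1}^s\prod_{i\in Q_0}\prod_{j=1}^{d^k_i}\frac{1}{q^j-1}\cdot q^{\binom{d^k_i}{2}}\cdot\ldots$$
— after the bookkeeping, one recognizes the right-hand side (up to the normalization $\prod(1-q^{-j})^{-1}$ versus $\prod(q^j-1)^{-1}$, absorbed into powers of $q$) as $p_{\mathbf{d}}(q)$ times $1/(q-1)$, the $1/(q-1)$ coming from the fact that in the $\Theta$-coprime case the stable $=$ semistable locus is a $PG_{\mathbf{d}}$-bundle over $M_{\mathbf{d}}^{\Theta\mathrm{-sst}}(Q)$, so that $|R_{\mathbf{d}}^{\Theta\mathrm{-sst}}(Q)(\mathbf{F}_q)|=|PG_{\mathbf{d}}(\mathbf{F}_q)|\cdot|M_{\mathbf{d}}^{\Theta\mathrm{-sst}}(Q)(\mathbf{F}_q)|$ and $|G_{\mathbf{d}}(\mathbf{F}_q)|=(q-1)|PG_{\mathbf{d}}(\mathbf{F}_q)|$. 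Thus $|M_{\mathbf{d}}^{\Theta\mathrm{-sst}}(Q)(\mathbf{F}_q)|=(q-1)p_{\mathbf{d}}(q)$.

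Finally I would pass from point counts to Betti numbers. Since $\mathbf{d}$ is $\Theta$-coprime, $M_{\mathbf{d}}^{\Theta\mathrm{-sst}}(Q)=M_{\mathbf{d}}^{\Theta\mathrm{-st}}(Q)$ is smooth, and moreover the HN recursion shows its class in the Grothendieck ring of varieties is a polynomial in $\mathbf{L}=[\mathbf{A}^1]$ with non-negative coefficients (the semistable strata are, inductively, built from affine spaces and semistable loci of smaller dimension vectors, and in the end everything is polynomial in $\mathbf{L}$); equivalently, $M_{\mathbf{d}}^{\Theta\mathrm{-sst}}(Q)$ has polynomial point count uniformly in $q$, so by the theorem of Katz (or directly, since the variety is of Tate/polynomial-count type and smooth) its compactly supported cohomology is pure, concentrated in even degrees, of Tate type, with $\dim H^{2k}_c=$ the coefficient of $q^k$ in $|M_{\mathbf{d}}^{\Theta\mathrm{-sst}}(Q)(\mathbf{F}_q)|$. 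Substituting $q\mapsto q$ and matching $(-q^{1/2})^i=q^{i/2}$ on even $i$ gives precisely $\sum_i\dim H^i_c(M_{\mathbf{d}}^{\Theta\mathrm{-sst}}(Q),\mathbf{Q})(-q^{1/2})^i=(q-1)p_{\mathbf{d}}(q)$.

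The main obstacle is the combinatorial inversion of the HN recursion: one must show that solving the triangular system of identities $\sum_{\mathbf{d}^1+\ldots+\mathbf{d}^s=\mathbf{d}}(\text{HN terms})=(\text{global count})$ for the semistable term produces exactly the alternating sum $p_{\mathbf{d}}(q)$ with the stated sign $(-1)^{s-1}$, exponent $-\sum_{k\leq l}\langle\mathbf{d}^l,\mathbf{d}^k\rangle$, and the product of $(1-q^{-j})^{-1}$ factors. This is a formal manipulation in a suitable twisted power series algebra (the ``quantum affine space'' on $\Lambda^+_\mu$), carried out in \cite{RHN}; the slope constraint $\mu(\mathbf{d}^1+\ldots+\mathbf{d}^k)>\mu(\mathbf{d})$ in the definition of $p_{\mathbf{d}}(q)$ is precisely what survives the inversion, and I would cite \cite{RHN} for this step rather than redo it. A secondary point requiring care is the purity/Tate-type argument: in the non-acyclic case $M_{\mathbf{d}}^{\Theta\mathrm{-sst}}(Q)$ is only quasi-projective over the affine $M_{\mathbf{d}}^{\mathrm{ssimp}}(Q)$, but polynomiality of the point count together with smoothness still forces the compactly supported cohomology to be pure Tate, so the Betti-number extraction goes through unchanged.
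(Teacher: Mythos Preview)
The paper does not prove this theorem itself; it cites \cite{RHN} for acyclic $Q$ and \cite{ER} for the extension to quivers with oriented cycles. Your outline is essentially the strategy of \cite{RHN}: Harder--Narasimhan stratification over ${\bf F}_q$, inversion of the resulting recursion to obtain the point count $|M_{\bf d}^{\Theta-\rm sst}(Q)({\bf F}_q)|=(q-1)p_{\bf d}(q)$, and passage to Betti numbers via the Weil conjectures. For acyclic $Q$ this is correct, since the moduli space is then smooth projective and purity of $H^*_c$ follows from Deligne.

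There is, however, a genuine gap in your final step when $Q$ has oriented cycles. You assert that ``polynomiality of the point count together with smoothness still forces the compactly supported cohomology to be pure Tate,'' but this implication is false: already $X={\bf G}_m$ is smooth with polynomial count $q-1$, yet $H^1_c(X)\simeq{\bf Q}(0)$ has weight $0$, so that $\sum_i\dim H^i_c(X,{\bf Q})(-q^{1/2})^i=q-q^{1/2}\neq q-1$. Katz's theorem recovers only the $E$-polynomial from the point count, not the individual Betti numbers, and neither smoothness nor being of Tate type repairs this (the ${\bf G}_m$ example has both). For quiver moduli the theorem is nevertheless true, but proving it requires additional structure---for instance the contracting ${\bf C}^*$-action on $M_{\bf d}^{\Theta-\rm sst}(Q)$ inherited from the cone structure on $M_{\bf d}^{\rm ssimp}(Q)$, or the smooth-model/framing constructions of \cite{ER}. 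This is precisely the content the paper is outsourcing to \cite{ER}, and your sketch stops short of it.
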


In the case of acyclic quivers, this is proved in \cite{RHN}; this restriction is subsequently removed in \cite{ER}.\\[1ex]
In case ${\bf d}$ is not $\Theta$-coprime, this theorem no longer holds true: the rational function $(q-1)p_{\bf d}(q)$ is no longer a polynomial in $q$, and we have seen above that the moduli space $M_{\bf d}^{\Theta-\rm sst}(Q)$ is typically singular, thus its singular cohomology cannot be assumed to be well-behaved.\\[1ex]
However, under a mild restriction (which will also play a distinguished role in the derivation of the main result of this paper), the intersection cohomology of the moduli space is determined using variants of the functions $p_{\bf d}(Q)$ in \cite{MeR}. We briefly review the resulting formula: \\[1ex]
Consider the complete commutative local ring ${\bf Q}(q^{1/2})[[\Lambda^+_\mu]]$ with topological basis $t^{\bf d}$ for ${\bf d}\in\Lambda^+_\mu\cup\{0\}$ and multiplication $t^{\bf d}t^{\bf e}=t^{{\bf d}+{\bf e}}$, with its maximal ideal $\mathfrak{m}$. Define a function ${\rm Exp}:\mathfrak{m}\rightarrow 1+\mathfrak{m}\subset {\bf Q}(q)[[\Lambda^+_\mu]]$, the plethystic exponential, by
$${\rm Exp}(q^it^{\bf d})=\frac{1}{1-q^it^{\bf d}}\mbox{ for }i\in\frac{1}{2}{\bf Z}, {\bf d}\in\Lambda^+_\mu\mbox{, and }{\rm Exp}(f+g)={\rm Exp}(f)\cdot{\rm Exp}(g).$$
We can then define rational functions ${\rm DT}_{\bf d}^{\Theta}(Q)$, the $q$-Donaldson-Thomas invariants of $Q$ for slope $\mu$, by
$$1+\sum_{{\bf d}\in\Lambda^+_\mu}(-q^{1/2})^{\langle{\bf d},{\bf d}\rangle}p_{\bf d}(q)t^{\bf d}={\rm Exp}(\frac{1}{q^{-1/2}-q^{1/2}}\sum_{{\bf d}\in\Lambda^+_\mu}{\rm DT}_{\bf d}^{\Theta}(Q)t^{\bf d}).$$

Then the main result of \cite{MeR} is the following:

\begin{theorem}\label{intersectionbettiformula} Assume that the restriction of $\langle\_,\_\rangle$ to $\Lambda^+_\mu$ is symmetric, and that $M_{\bf d}^{\Theta-\rm sst}(Q)\not=\emptyset$. Then the following formula for the Betti numbers of $M_{\bf d}^{\Theta-\rm sst}(Q)$ in compactly supported intersection cohomology holds:

$$\sum_i\dim{\rm IH}^i_c(M_d^{\rm sst}(Q),{\bf Q})(-q^{1/2})^i=(-q^{1/2})^{1-\langle{\bf d},{\bf d}\rangle}{\rm DT}_{\bf d}^{\Theta}(Q).$$
\end{theorem}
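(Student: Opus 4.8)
The plan is to deduce the formula from an identity of generating series in ${\bf Q}(q^{1/2})[[\Lambda^+_\mu]]$, obtained by matching the algebraic factorization that \emph{defines} ${\rm DT}_{\bf d}^\Theta(Q)$ against a geometric decomposition coming from the decomposition theorem for the map from (a smooth approximation of) the moduli stack onto the coarse moduli space $M_{\bf d}^{\Theta-\rm sst}(Q)$.

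First I would reinterpret the left-hand side of the defining equation. By the resolved Harder--Narasimhan recursion of \cite{RHN} that underlies Theorem \ref{bettiformula}, the rational function $p_{\bf d}(q)$ is, up to the sign convention in the exponent, nothing but the class (point count) of the quotient stack $\mathfrak{M}_{\bf d}^{\Theta-\rm sst}(Q)=[R_{\bf d}^{\Theta-\rm sst}(Q)/G_{\bf d}]$; thus the series $1+\sum_{\bf d}(-q^{1/2})^{\langle{\bf d},{\bf d}\rangle}p_{\bf d}(q)t^{\bf d}$ records the compactly supported cohomology of these stacks for all ${\bf d}\in\Lambda^+_\mu$. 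The hypothesis that $\langle\_,\_\rangle$ is symmetric on $\Lambda^+_\mu$ makes the relevant Hall-type product on $\Lambda^+_\mu$ commutative, so that ${\rm Log}$ is defined and ${\rm DT}_{\bf d}^\Theta(Q)=(q^{-1/2}-q^{1/2})\cdot{\rm Log}(\cdots)_{\bf d}$ extracts exactly the ``connected'' part of the stacky cohomology at dimension vector ${\bf d}$.

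Next I would stratify $M_{\bf d}^{\Theta-\rm sst}(Q)$ by polystable decomposition type --- the slope-$\mu$ analogue of the Luna stratification recalled in Section \ref{s22} --- so that, via Luna's slice theorem, the stratum $S_\xi$ of type $\xi=(({\bf d}^1,m_1),\ldots,({\bf d}^s,m_s))$ (with stable constituents of slope $\mu$) is modelled \'etale-locally on $M_{{\bf d}_\xi}^{\rm ssimp}(Q_\xi)$ near the origin, the stacky quotient map being correspondingly modelled on $[R_{{\bf d}_\xi}(Q_\xi)/G_{{\bf d}_\xi}]\to M_{{\bf d}_\xi}^{\rm ssimp}(Q_\xi)$ with fibre the nullcone stack $[N_{{\bf d}_\xi}(Q_\xi)/G_{{\bf d}_\xi}]$. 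Replacing the stack by a smooth approximation --- e.g. a framed (Nakajima-type) moduli space, which for generic framing is smooth with pure, explicitly computable cohomology, the computation essentially reducing to the $\Theta$-coprime situation of Theorem \ref{bettiformula} --- and applying the decomposition theorem, I would write the direct image of the constant sheaf as a sum $\bigoplus_\xi {\rm IC}_{\overline{S_\xi}}(\mathcal{L}_\xi)$ up to shifts; symmetry of the Euler form forces every local system $\mathcal{L}_\xi$ to be trivial and, together with purity and parity vanishing, rules out extraneous summands. An induction on ${\bf d}$ then closes the argument: the trivial type $(({\bf d},1))$ contributes ${\rm IC}_{M_{\bf d}^{\Theta-\rm sst}(Q)}$, i.e. ${\rm IH}^*_c(M_{\bf d}^{\Theta-\rm sst}(Q))$, while each other $S_\xi$ contributes, by the inductive hypothesis applied to the local quiver $Q_\xi$ with dimension vector ${\bf d}_\xi$, a term built from ${\rm IH}^*_c$ of strictly smaller moduli twisted by the cohomology of the corresponding nullcone stack. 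Since the ``product over ordered decompositions'' shape of the plethystic ${\rm Exp}$ mirrors this stratification term by term, matching the two decompositions isolates ${\rm IH}^*_c(M_{\bf d}^{\Theta-\rm sst}(Q))$ opposite ${\rm DT}_{\bf d}^\Theta(Q)$, which is the assertion after bookkeeping the normalizing factor $(-q^{1/2})^{1-\langle{\bf d},{\bf d}\rangle}$ (note $1-\langle{\bf d},{\bf d}\rangle$ is the expected dimension of $M_{\bf d}^{\Theta-\rm st}(Q)$).

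The hard part is the support theorem that makes this induction go through: one must show that the decomposition theorem produces no summands beyond the ${\rm IC}_{\overline{S_\xi}}$ over the Luna strata and, above all, that the ``excess'' of the decomposition over an honestly small map leaks no extra contribution into the leading term ${\rm IC}_{M_{\bf d}^{\Theta-\rm sst}(Q)}$. This is precisely where the dimension estimate for nullcones derived in Section \ref{s4} --- already the technical heart of \cite{MeR} --- enters: it bounds $\dim N_{{\bf d}_\xi}(Q_\xi)$, hence the fibre dimension of the quotient map over $S_\xi$, tightly enough in terms of the codimension of $S_\xi$ that, in the limit over the framing parameter, every contribution not already accounted for by the inductive/plethystic bookkeeping is forced to vanish. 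The symmetry hypothesis is indispensable here as well --- without it the nullcone bound is weaker, purity may fail, and the factorization breaks down --- which is exactly the point at which the restriction stated in the theorem is needed.
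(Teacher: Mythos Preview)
The paper does not prove this theorem. It is stated in Section~\ref{s25} as ``the main result of \cite{MeR}'' and simply quoted; no argument is given in the present paper. So there is no proof here to compare your proposal against.

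That said, your sketch is a reasonable outline of the strategy actually used in \cite{MeR}: interpret $p_{\bf d}(q)$ as the motivic class of the stack $[R_{\bf d}^{\Theta-\rm sst}(Q)/G_{\bf d}]$, pass to a smooth framed approximation and a proper map to the coarse moduli space, apply the decomposition theorem along the Luna stratification by polystable type, and use the nullcone dimension estimate (your Section~\ref{s4} reference) as the support-type input that controls which ${\rm IC}$ sheaves can appear. The symmetry hypothesis enters exactly where you say, both to make the twisted Hall product commutative (so that the plethystic ${\rm Exp}$/${\rm Log}$ formalism applies) and to make the nullcone bound sharp enough. One point you leave vague is the precise mechanism by which the framing is removed in the limit and why purity survives that limit; in \cite{MeR} this is handled carefully and is not a formality. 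But since the present paper only \emph{uses} Theorem~\ref{intersectionbettiformula} and does not claim to prove it, your write-up here should simply cite \cite{MeR} rather than attempt an independent proof.
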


\section{Construction of desingularizations}\label{s3}

In this section, we construct desingularizations of moduli spaces of semistable representations using the concept of a generic deformation of a stability. We describe the fibres of the desingularization in terms of moduli spaces of representations of local quivers using methods of \cite{ALB}.

\begin{definition} Let ${\bf d}$ be a dimension vector for $Q$. Let $\Theta$ be a stability for $Q$ such that $\Theta({\bf d})=0$. A stability $\Theta'$ for $Q$ is called a generic deformation of $\Theta$ with respect to the dimension vector ${\bf d}$ if the following holds for all $0\not={\bf e}\lneqq{\bf d}$:
\begin{enumerate}
\item If $\Theta({\bf e})<0$, then $\Theta'({\bf e})<0$,
\item if $\Theta'({\bf e})\leq 0$, then $\Theta({\bf e})\leq 0$,
\item ${\bf d}$ is $\Theta'$-coprime.
\end{enumerate}
\end{definition}

Note that the last condition already implies that ${\bf d}$ is indivisible. The definition of a generic deformation immediately implies the following:

\begin{lemma} Assume that $\Theta'$ is a generic deformation of $\Theta$ with respect to ${\bf d}$, and let $V$ be a representation of $Q$ of dimension vector ${\bf d}$. Then we have the following implications:
\begin{itemize}
\item If $V$ is $\Theta$-stable, then it is $\Theta'$-stable,
\item $V$ is $\Theta'$-stable if and only if it is $\Theta'$-semistable,
\item if $V$ is $\Theta'$-semistable, it is $\Theta$-semistable.
\end{itemize}
\end{lemma}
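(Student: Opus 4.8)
The plan is to verify each of the three implications directly from the defining conditions of a generic deformation, translating each statement about (semi-)stability of $V$ into a statement about the slopes $\mu$, $\mu'$ of proper subrepresentations, and then reading off the required inequality from conditions (1), (2), (3). Since $\Theta({\bf d})=0$ and (as remarked after the definition) $\Theta'({\bf d})=0$ as well — the coprimality condition in particular forces $\Theta'({\bf d})\ne\Theta'({\bf e})$ for proper ${\bf e}$, and we normalize $\Theta'({\bf d})=0$ — the slope $\mu(V)=\mu'(V)=0$. Hence for a nonzero proper subrepresentation $U\subsetneq V$ with ${\bf e}={\bf dim}\,U$, the inequality $\mu(U)\le\mu(V)=0$ is equivalent to $\Theta({\bf e})\le 0$, and similarly $\mu(U)<0$ is equivalent to $\Theta({\bf e})<0$, and likewise for $\Theta'$.

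First I would prove: if $V$ is $\Theta$-stable, then it is $\Theta'$-stable. Let $U\subsetneq V$ be a nonzero proper subrepresentation with dimension vector ${\bf e}$. By $\Theta$-stability, $\Theta({\bf e})=\mu(U)\cdot\dim{\bf e}<0$, so $\Theta({\bf e})<0$. By condition (1), $\Theta'({\bf e})<0$, i.e. $\mu'(U)<0=\mu'(V)$. Since $U$ was arbitrary, $V$ is $\Theta'$-stable.

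Next, the equivalence ``$V$ is $\Theta'$-stable iff $V$ is $\Theta'$-semistable'': stable always implies semistable, so only the converse needs argument, and this is exactly the statement that ${\bf d}$ being $\Theta'$-coprime forces $R_{\bf d}^{\Theta'\text{-st}}(Q)=R_{\bf d}^{\Theta'\text{-sst}}(Q)$. Concretely, if $V$ were $\Theta'$-semistable but not $\Theta'$-stable, there would be a nonzero proper $U\subsetneq V$ with $\mu'(U)=\mu'(V)$, hence $\Theta'({\bf dim}\,U)=\Theta'({\bf d})=0$, contradicting $\Theta'$-coprimeness (condition (3)). Finally, the third implication: if $V$ is $\Theta'$-semistable, it is $\Theta$-semistable. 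Take $U\subsetneq V$ nonzero proper with dimension vector ${\bf e}$; $\Theta'$-semistability gives $\mu'(U)\le\mu'(V)=0$, i.e. $\Theta'({\bf e})\le 0$. By condition (2), $\Theta({\bf e})\le 0$, i.e. $\mu(U)\le 0=\mu(V)$. Hence $V$ is $\Theta$-semistable.

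This proof is essentially bookkeeping, so there is no serious obstacle; the one point that requires a moment of care is the normalization $\Theta'({\bf d})=0$. If $\Theta'$ is not a priori normalized this way, one should invoke the invariance property noted at the end of Section~\ref{s23}: replacing $\Theta'$ by $\widetilde{\Theta'}=x\Theta'+y\dim$ with suitable $x\in{\bf Q}^+$, $y\in{\bf Z}$ rescales all slopes by a constant and an additive shift, so one may assume $\Theta'({\bf d})=0$ without changing the (semi-)stable loci; and with $\Theta({\bf d})=0$ too, all the slope comparisons above reduce cleanly to sign conditions on $\Theta({\bf e})$ and $\Theta'({\bf e})$. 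With that normalization in place, the three implications follow immediately as above. \hb
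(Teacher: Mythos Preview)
Your proof is correct and follows exactly the approach the paper intends: the paper simply states that the lemma ``immediately'' follows from the definition of a generic deformation and gives no further argument, and your write-up spells out precisely this immediate verification from conditions (1), (2), (3). The only extra care you take is making the normalization $\Theta'({\bf d})=0$ explicit, which the paper tacitly assumes throughout (cf.\ the construction in Theorem~\ref{theorem51}); this is a sensible clarification rather than a different approach.
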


An explicit construction of a generic deformation of a given stability $\Theta$ for an indivisible dimension vector ${\bf d}$ will be given in Section \ref{s5}.

\begin{proposition}\label{desing} Let ${\bf d}$ be dimension vector for $Q$, let $\Theta$ be a stability for $Q$ such that $\Theta({\bf d})=0$, and let $\Theta'$ be a generic deformation of $\Theta$ with respect to ${\bf d}$. Assume that $M_{\bf d}^{\Theta-\rm st}(Q)\not=\emptyset$.  Then there exists a desingularization
$$p:M_{\bf d}^{\Theta'-\rm sst}(Q)\rightarrow M_{\bf d}^{\Theta-\rm sst}(Q).$$
\end{proposition}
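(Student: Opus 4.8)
The plan is to obtain $p$ as the canonical morphism induced by the inclusion of semistable loci, and then to check the three defining properties of a desingularization: smoothness of the source, properness of $p$, and birationality of $p$. Since $\Theta'$ is a generic deformation of $\Theta$ with respect to ${\bf d}$, the dimension vector ${\bf d}$ is $\Theta'$-coprime, so by the facts recalled in Section \ref{s24} we have $R_{\bf d}^{\Theta'-\rm st}(Q)=R_{\bf d}^{\Theta'-\rm sst}(Q)$, and $M_{\bf d}^{\Theta'-\rm sst}(Q)=M_{\bf d}^{\Theta'-\rm st}(Q)$ is a smooth variety of dimension $1-\langle{\bf d},{\bf d}\rangle$, projective over $M_{\bf d}^{\rm ssimp}(Q)$. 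It is non-empty because, by the preceding Lemma, the $\Theta$-stable representations of dimension vector ${\bf d}$ (which exist by hypothesis) are $\Theta'$-stable; and both $M_{\bf d}^{\Theta-\rm sst}(Q)$ and $M_{\bf d}^{\Theta'-\rm sst}(Q)$ are irreducible, being images under the respective quotient maps of non-empty open subsets of the affine space $R_{\bf d}(Q)$.

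To construct $p$, I would use that, again by the preceding Lemma, every $\Theta'$-semistable representation of dimension vector ${\bf d}$ is $\Theta$-semistable, so that $R_{\bf d}^{\Theta'-\rm sst}(Q)$ is a $G_{\bf d}$-stable open subset of $R_{\bf d}^{\Theta-\rm sst}(Q)$. Composing this open inclusion with the GIT quotient $R_{\bf d}^{\Theta-\rm sst}(Q)\to M_{\bf d}^{\Theta-\rm sst}(Q)$ yields a $G_{\bf d}$-invariant morphism out of $R_{\bf d}^{\Theta'-\rm sst}(Q)$; since ${\bf d}$ is $\Theta'$-coprime, the quotient $R_{\bf d}^{\Theta'-\rm sst}(Q)\to M_{\bf d}^{\Theta'-\rm sst}(Q)$ is geometric, hence categorical, so this morphism descends to a unique morphism $p:M_{\bf d}^{\Theta'-\rm sst}(Q)\to M_{\bf d}^{\Theta-\rm sst}(Q)$. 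On points, $p$ sends the isomorphism class of a $\Theta'$-stable representation $V$ to that of its semisimplification in the abelian category ${\rm rep}_{\bf C}^0(Q)$ of $\Theta$-semistable representations of slope $0$. Because the triangle formed by $p$ and the two canonical projective morphisms to $M_{\bf d}^{\rm ssimp}(Q)$ commutes (it does so after pull-back to $R_{\bf d}^{\Theta'-\rm sst}(Q)$, where all maps are the evident ones), $p$ is a morphism of schemes over $M_{\bf d}^{\rm ssimp}(Q)$ whose source is projective and whose target is separated over $M_{\bf d}^{\rm ssimp}(Q)$; hence $p$ is proper, in fact projective.

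To see that $p$ is birational and surjective, I would examine the open subset $M_{\bf d}^{\Theta-\rm st}(Q)\subseteq M_{\bf d}^{\Theta-\rm sst}(Q)$, which is non-empty by hypothesis and dense by irreducibility. If $V$ is $\Theta'$-stable and $p([V])$ lies in $M_{\bf d}^{\Theta-\rm st}(Q)$, then the semisimplification of $V$ in ${\rm rep}_{\bf C}^0(Q)$ is a simple object; since a non-zero object with simple semisimplification is itself simple, $V$ is $\Theta$-stable and $p([V])=[V]$. Conversely, any $\Theta$-stable $V$ is $\Theta'$-stable and satisfies $p([V])=[V]$. Therefore $p$ restricts to an isomorphism $p^{-1}(M_{\bf d}^{\Theta-\rm st}(Q))\stackrel{\sim}{\to}M_{\bf d}^{\Theta-\rm st}(Q)$, so $p$ is birational; and since $p$ is proper, its image is closed and contains the dense subset $M_{\bf d}^{\Theta-\rm st}(Q)$, so $p$ is surjective. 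Altogether $p$ is a proper birational surjective morphism from the smooth irreducible variety $M_{\bf d}^{\Theta'-\rm sst}(Q)$, that is, a desingularization of $M_{\bf d}^{\Theta-\rm sst}(Q)$.

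I expect no serious obstacle here: once the preceding Lemma is in hand, the argument is essentially formal. The points demanding the most care are the GIT bookkeeping in the construction of $p$ --- in particular establishing properness through the affine quotient $M_{\bf d}^{\rm ssimp}(Q)$, since acyclicity of $Q$ (which would give projectivity directly) is unavailable --- and checking that $p$ is genuinely an isomorphism, rather than merely a finite birational map, over $M_{\bf d}^{\Theta-\rm st}(Q)$. The more substantial content, namely the description of the fibres of $p$ over the Luna strata of $M_{\bf d}^{\rm ssimp}(Q)$ in terms of local quivers and, ultimately, the proof that $p$ is a \emph{small} resolution, goes beyond the present statement and is addressed in the subsequent sections.
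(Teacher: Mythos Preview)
Your proposal is correct and follows essentially the same route as the paper: construct $p$ from the open inclusion $R_{\bf d}^{\Theta'-\rm sst}(Q)\subset R_{\bf d}^{\Theta-\rm sst}(Q)$ by passing to quotients, deduce properness from the commuting triangle over $M_{\bf d}^{\rm ssimp}(Q)$, and verify that $p$ is an isomorphism over the non-empty open $M_{\bf d}^{\Theta-\rm st}(Q)$ using the chain $R_{\bf d}^{\Theta-\rm st}(Q)\subset R_{\bf d}^{\Theta'-\rm st}(Q)=R_{\bf d}^{\Theta'-\rm sst}(Q)\subset R_{\bf d}^{\Theta-\rm sst}(Q)$. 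Your treatment is in fact a bit more thorough than the paper's (explicit mention of irreducibility and surjectivity, the categorical-quotient justification for the descent of $p$), and your semisimplification phrasing for the behaviour of $p$ over the stable locus is just a representation-theoretic restatement of the paper's geometric argument via open embeddings.
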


\proof Since a $\Theta'$-semistable representation is already $\Theta$-semistable by the previous lemma, we have an open inclusion $$R_{\bf d}^{\Theta'-\rm sst}(Q)\subset R_{\bf d}^{\Theta-\rm sst}(Q).$$ Passing to $G_{\bf d}$-quotients, it induces a map $$p:M_{\bf d}^{\Theta'-\rm sst}(Q)\rightarrow M_{\bf d}^{\Theta-\rm sst}(Q).$$
This map commutes with the natural projections to $M_{\bf d}^{\rm ssimp}(Q)$, thus we have a commuting triangle
$$\begin{array}{ccc} M_{\bf d}^{\Theta'-\rm sst}(Q)&\stackrel{p}{\rightarrow}&M_{\bf d}^{\Theta-\rm sst}(Q)\\
&\searrow&\downarrow\\
&&M_{\bf d}^{\rm ssimp}(Q).\end{array}$$
Since the vertical maps are projective, we conclude that $p$ is proper.\\[1ex]
Again by the previous lemma, we have a chain of open embeddings
$$R_{\bf d}^{\Theta-\rm st}(Q)\subset R_{\bf d}^{\Theta'-\rm st}(Q)=R_{\bf d}^{\Theta'-\rm sst}(Q)\subset R_{\bf d}^{\Theta-\rm sst}(Q).$$
Taking $G_{\bf d}$-quotients, we thus have a chain of maps
$$M_{\bf d}^{\Theta-\rm st}(Q)\subset M_{\bf d}^{\Theta'-\rm st}(Q)=M_{\bf d}^{\Theta'-\rm sst}(Q)\stackrel{p}{\rightarrow}M_{\bf d}^{\Theta-\rm sst}(Q).$$
The first map in this chain is an open inclusion since the $G_{\bf d}$-quotient is geometric on stable loci. The composition of this open inclusion with $p:M_{\bf d}^{\Theta'-sst}(Q)\rightarrow M_{\bf d}^{\Theta-\rm sst}(Q)$ equals the open inclusion
$$M_{\bf d}^{\Theta-\rm st}(Q)\subset M_{\bf d}^{\Theta-\rm sst}(Q).$$
We conclude that $p$ is an isomorphism over $M_{\bf d}^{\rm st}(Q)$, and in particular birational in case $M_{\bf d}^{\Theta-\rm st}(Q)\not=\emptyset$.  Moreover, $M_{\bf d}^{\Theta'-\rm sst}(Q)=M_{\bf d}^{\Theta'-\rm st}(Q)$ is smooth , thus $p$ is a desingularization, finishing the proof.\\[2ex]
To determine the fibres of the above desingularization, we first recall a generalization of the Luna stratification and of the description of fibres of quotient maps following \cite{ALB}.\\[1ex]
The points of the moduli space $M_{\bf d}^{\Theta-\rm sst}(Q)$ parametrize isomorphism classes of $\Theta$-polystable representations $V$ of $Q$ of dimension vector ${\bf d}$. By definition, this means that such a representation $V$ is isomorphic to a direct sum $U_1^{m_1}\oplus\ldots\oplus U_k^{m_k}$ of pairwise non-isomorphic stable representations of slope $\mu({\bf d})$. The decomposition type of $V$ is the tuple
$$\xi=(({\rm\bf dim} U_1,m_1),\ldots,{\rm\bf dim} U_s,m_s)).$$
We define $S_\xi$ as the set of all points of $M_{\bf d}^{\Theta-\rm sst}(Q)$ of decomposition type $\xi$, which is a locally closed subset of $M_{\bf d}^{\Theta-\rm sst}(Q)$. It is known \cite{ALB} that the quotient map $R_{\bf d}^{\Theta-\rm sst}(Q)\rightarrow M_{\bf d}^{\Theta-\rm sst}(Q)$ is \'etale locally trivial over every Luna stratum. The dense stratum is $M_{\bf d}^{\Theta-\rm st}(Q)$; it corresponds to the trivial decomposition type $(({\bf d},1))$.\\[1ex]
To a decomposition type $\xi$ as above, we associate the quiver $Q_\xi$ with vertices $i_1,\ldots,i_s$. The number of arrows from $i_k$ to $i_l$ is given as $\delta_{k,l}-\langle {\bf d}^k,{\bf d}^l\rangle$. We define a dimension vector ${\bf d}_\xi$ for $Q_\xi$ by ${\bf d}_\xi=\sum_{k=1}^sm_ki_k$.\\[1ex]
Then we have the following, again by \cite{ALB}:

\begin{theorem} Let $\pi:R_{\bf d}^{\Theta-\rm sst}(Q)\rightarrow M_{\bf d}^{\Theta-\rm sst}(Q)$ be the quotient morphism, and let $V$ be a point in $S_\xi$. Then
$$\pi^{-1}(V)\simeq G_{\bf d}\times^{G_{{\bf d}_\xi}}N_{{\bf d}_\xi}(Q_\xi).$$
\end{theorem}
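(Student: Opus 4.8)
The plan is to deduce this fibre description from Luna's slice theorem applied to the $G_{\bf d}$-action on the semistable locus $R_{\bf d}^{\Theta-\rm sst}(Q)$, in direct analogy with the $\Theta=0$ case recalled in Section \ref{s22}. First I would observe that a point $V\in S_\xi$ corresponds to a $\Theta$-polystable representation $V\simeq\bigoplus_k U_k^{m_k}$ whose $G_{\bf d}$-orbit is closed in $R_{\bf d}^{\Theta-\rm sst}(Q)$ (since polystable representations are exactly the semisimple objects of ${\rm rep}_{\bf C}^\mu(Q)$, which correspond to closed orbits in the semistable locus). Luna's slice theorem then applies to the closed orbit $G_{\bf d}V$: the stabilizer of $V$ is ${\rm Aut}(V)\simeq\prod_k {\rm GL}_{m_k}({\bf C})=G_{{\bf d}_\xi}$, and there is a $G_{{\bf d}_\xi}$-stable locally closed slice $S\ni V$ such that the natural map $G_{\bf d}\times^{G_{{\bf d}_\xi}}S\to R_{\bf d}^{\Theta-\rm sst}(Q)$ is étale onto a saturated open neighbourhood of the orbit.

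The key step is to identify the slice $S$ together with its $G_{{\bf d}_\xi}$-action. By Luna, $S$ can be taken to be (an étale neighbourhood of $0$ in) the normal space $N_V$ to the orbit, which as a $G_{{\bf d}_\xi}$-representation is ${\rm Ext}^1_Q(V,V)$; here I would use the exact sequence $0\to{\rm End}(V)\to\bigoplus_\alpha{\rm Hom}(V_i,V_j)\to{\rm Ext}^1(V,V)\to 0$ (the vanishing of higher Ext for quivers) identifying the tangent space to $R_{\bf d}$ at $V$ modulo the tangent space to the orbit with ${\rm Ext}^1(V,V)$. Writing $V=\bigoplus U_k^{m_k}$ and using ${\rm Hom}(U_k,U_l)=\delta_{k,l}{\bf C}$, ${\dim\,\rm Ext}^1(U_k,U_l)=\delta_{k,l}-\langle{\bf d}^k,{\bf d}^l\rangle$ (Euler form, since $U_k$ stable of the same slope forces ${\rm Hom}(U_k,U_l)=0$ for $k\ne l$), one computes ${\rm Ext}^1(V,V)\simeq\bigoplus_{k,l}{\rm Hom}({\bf C}^{m_k},{\bf C}^{m_l})^{\oplus(\delta_{k,l}-\langle{\bf d}^k,{\bf d}^l\rangle)}$ as a $G_{{\bf d}_\xi}$-module — which is precisely $R_{{\bf d}_\xi}(Q_\xi)$ with its $G_{{\bf d}_\xi}$-action, by the definition of $Q_\xi$ and ${\bf d}_\xi$. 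This is the Ringel-simplification picture already alluded to in the excerpt: the subcategory of ${\rm rep}_{\bf C}(Q)$ of objects filtered by $U_1,\dots,U_s$ is equivalent to the nilpotent representations of $Q_\xi$, so the slice through $V$ is modeled on $R_{{\bf d}_\xi}(Q_\xi)$ near its nilpotent locus.

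Finally I would assemble the fibre statement. Since $\pi$ is $G_{\bf d}$-invariant and $S$ is a slice, $\pi^{-1}(\pi(V))$ meets $G_{\bf d}\times^{G_{{\bf d}_\xi}}S$ in $G_{\bf d}\times^{G_{{\bf d}_\xi}}(S\cap\pi_S^{-1}(0))$ where $\pi_S:S\to S//G_{{\bf d}_\xi}$ is the local quotient; and the zero fibre of the quotient map $R_{{\bf d}_\xi}(Q_\xi)\to M_{{\bf d}_\xi}^{\rm ssimp}(Q_\xi)$ is exactly the nullcone $N_{{\bf d}_\xi}(Q_\xi)$ (described in Section \ref{s22} as the nilpotent representations). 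One then checks that points of $S$ over $0$ are exactly those whose associated $Q$-representation is, up to the équivalence, nilpotent as a $Q_\xi$-representation — i.e. those mapping to the same polystable point $V$ under $\pi$, because a representation filtered by the $U_k$ has the same semisimplification $\bigoplus U_k^{m_k}$ iff the corresponding $Q_\xi$-representation is nilpotent. Putting $G_{\bf d}$ back in via the associated-bundle construction gives $\pi^{-1}(V)\simeq G_{\bf d}\times^{G_{{\bf d}_\xi}}N_{{\bf d}_\xi}(Q_\xi)$.

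The main obstacle I anticipate is not the orbit/stabilizer bookkeeping but making the identification of the slice with $R_{{\bf d}_\xi}(Q_\xi)$ genuinely $G_{{\bf d}_\xi}$-equivariant and compatible with the two quotient maps, so that "zero fibre on the slice" really corresponds to "nullcone of $Q_\xi$"; this is where one must invoke the Ringel-simplification equivalence carefully (and cite \cite{ALB,DengXiao}) rather than just match dimensions. A secondary subtlety is that $R_{{\bf d}_\xi}(Q_\xi)$ contains non-nilpotent representations not directly visible in the slice neighbourhood, so the identification is really between the slice and a neighbourhood of $N_{{\bf d}_\xi}(Q_\xi)$ in $R_{{\bf d}_\xi}(Q_\xi)$; since the nullcone and its saturation are what enter the fibre, this causes no harm, but it should be stated precisely.
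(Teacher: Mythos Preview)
The paper does not supply its own proof of this theorem; it simply cites it from \cite{ALB} (``Then we have the following, again by \cite{ALB}''), having already recalled the $\Theta=0$ version of the same Luna-slice argument in Section~\ref{s22}. Your sketch---closed orbit of the polystable point, stabilizer $\prod_k{\rm GL}_{m_k}\simeq G_{{\bf d}_\xi}$, normal slice ${\rm Ext}^1(V,V)\simeq R_{{\bf d}_\xi}(Q_\xi)$ as $G_{{\bf d}_\xi}$-modules, zero fibre of the local quotient equals the nullcone---is exactly the argument of \cite{ALB}, so your approach is correct and coincides with the source the paper defers to.
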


Given a generic deformation $\Theta'$ of $\Theta$, we define a stability $\Theta_\xi$ for $Q_\xi$ by $$\Theta_\xi(i_k)=\Theta'(d^k)$$ for all $k=1,\ldots,s$. Note that, by definition of $\Theta_\xi$ and a direct calculation, the dimension vector ${\bf d}_\xi$ is $\Theta_\xi$-coprime since ${\bf d}$ is $\Theta'$-coprime.

\begin{theorem}\label{albiso} Over a point $V$ in the Luna stratum $S_\xi\subset M_{\bf d}^{\Theta-\rm sst}(Q)$, the fibre $p^{-1}(V)$ is isomorphic to the moduli space of nilpotent $\Theta_\xi$-semistable representation of $Q_\xi$ of dimension vector ${\bf d}_\xi$:
$$p^{-1}(x)\simeq M_{{\bf d}_\xi}^{\Theta_\xi-\rm sst, nilp}(Q_\xi).$$
\end{theorem}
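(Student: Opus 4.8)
The plan is to combine the local model for the map $p$ (étale-locally trivial over each Luna stratum, via \cite{ALB}) with a careful bookkeeping of how the stability $\Theta'$ restricts to the Ringel-simplified category $\mathrm{rep}_{\bf C}(Q_\xi)$. Concretely, fix $V \simeq \bigoplus_k U_k^{m_k}$ in $S_\xi$. By the theorem preceding this statement, $\pi^{-1}(V) \simeq G_{\bf d} \times^{G_{{\bf d}_\xi}} N_{{\bf d}_\xi}(Q_\xi)$, and more is true: étale-locally around $V$, the quotient map $\pi$ is modeled on the $G_{{\bf d}_\xi}$-action on (a $G_{{\bf d}_\xi}$-saturated neighbourhood inside) $N_{{\bf d}_\xi}(Q_\xi)$, with the zero fibre matching up with $\pi^{-1}(V)$. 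The fibre $p^{-1}(V)$ is the preimage of $V$ under $M_{\bf d}^{\Theta'-\rm sst}(Q) \to M_{\bf d}^{\Theta-\rm sst}(Q)$, so it is the quotient by $G_{\bf d}$ of the intersection $R_{\bf d}^{\Theta'-\rm sst}(Q) \cap \pi^{-1}(V)$. Transporting through the slice, this becomes $\big(N_{{\bf d}_\xi}(Q_\xi) \cap (\text{the }\Theta'\text{-semistable locus pulled back to the slice})\big) // G_{{\bf d}_\xi}$.

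The technical heart is to identify the pulled-back $\Theta'$-semistability condition on the slice with $\Theta_\xi$-semistability for $Q_\xi$. For this I would argue at the level of abelian categories. The representations of $Q$ of dimension vector ${\bf d}$ that are nearby $V$ and lie over the Luna stratum $S_\xi$ are, by Ringel simplification (as recalled in Section \ref{s2}, see \cite{Ringel,DengXiao}), exactly the objects of the subcategory $\mathcal{C}$ of $\mathrm{rep}_{\bf C}(Q)$ filtered by $U_1,\dots,U_s$; and $\mathcal{C}$ is equivalent to the category $\mathcal{N}$ of nilpotent representations of $Q_\xi$. Under this equivalence, a representation $W \in \mathcal{C}$ of dimension vector ${\bf d}$ corresponds to a nilpotent $Q_\xi$-representation of dimension vector ${\bf d}_\xi$, and a subrepresentation $W' \subset W$ in $\mathcal{C}$ (necessarily also filtered by the $U_k$) corresponds to a subrepresentation of dimension vector $\sum_k n_k i_k$ where ${\bf dim}\, W' = \sum_k n_k {\bf d}^k$. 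Hence $\Theta'({\bf dim}\, W') = \sum_k n_k \Theta'({\bf d}^k) = \sum_k n_k \Theta_\xi(i_k) = \Theta_\xi$ applied to the dimension vector of the corresponding $Q_\xi$-subrepresentation; the same holds for $W$ itself, and $\Theta'({\bf d}) = 0 = \Theta_\xi({\bf d}_\xi)$ since $\Theta({\bf d})=0$ and, up to the harmless normalization discussed in Section \ref{s23}, we may assume $\Theta'({\bf d})=0$ as well. Therefore the $\Theta'$-slope of $W$ (and of each subobject) equals the $\Theta_\xi$-slope of the corresponding $Q_\xi$-object, so $W$ is $\Theta'$-semistable within $\mathrm{rep}_{\bf C}(Q)$ precisely when its image is $\Theta_\xi$-semistable within $\mathcal{N} \subset \mathrm{rep}_{\bf C}(Q_\xi)$. (Here one uses that for $W$ of slope $\mu({\bf d})$, every subrepresentation of $W$ of a different slope is automatically outside $\mathcal{C}$, so only subobjects in $\mathcal{C}$ matter for testing semistability of $W$ among representations in a neighbourhood of $V$ — this needs the fact that $\mathrm{rep}_{\bf C}^{\mu({\bf d})}(Q)$ is an abelian subcategory, so Harder–Narasimhan and slope comparisons behave well.)

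Combining the two identifications: the slice-model of $R_{\bf d}^{\Theta'-\rm sst}(Q) \cap \pi^{-1}(V)$ is, $G_{{\bf d}_\xi}$-equivariantly, the nilpotent $\Theta_\xi$-semistable locus $N_{{\bf d}_\xi}(Q_\xi) \cap R_{{\bf d}_\xi}^{\Theta_\xi-\rm sst}(Q_\xi)$, and passing to $G_{{\bf d}_\xi}$-quotients yields
$$
p^{-1}(V) \simeq \big(N_{{\bf d}_\xi}(Q_\xi) \cap R_{{\bf d}_\xi}^{\Theta_\xi-\rm sst}(Q_\xi)\big)//G_{{\bf d}_\xi} = M_{{\bf d}_\xi}^{\Theta_\xi-\rm sst,nilp}(Q_\xi),
$$
using the alternative description of $M^{\Theta-\rm sst,nilp}$ recorded at the end of Section \ref{s24}. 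One must also check that forming the GIT quotient commutes with passing to the zero fibre and with the étale slice — this is standard for Luna slices since semistability is an open condition preserved by the slice étale map, and the linearizing character $\chi$ restricts compatibly. The main obstacle I anticipate is precisely this last bookkeeping: making rigorous that ``$\Theta'$-semistable for $Q$, restricted to a Luna-slice neighbourhood of $V$'' agrees on the nose with ``$\Theta_\xi$-semistable for $Q_\xi$'' as schemes (not just on points), including the compatibility of the GIT linearizations under the slice morphism; everything else is either cited from \cite{ALB} or a direct computation with slopes as above. Since ${\bf d}_\xi$ is $\Theta_\xi$-coprime (noted just before the statement), there is no ambiguity between stable and semistable on the $Q_\xi$ side, which keeps the quotient construction clean. \hb
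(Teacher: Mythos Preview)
Your overall strategy matches the paper's: use the slice isomorphism $\pi^{-1}(V)\simeq G_{\bf d}\times^{G_{{\bf d}_\xi}}N_{{\bf d}_\xi}(Q_\xi)$ from \cite{ALB}, identify the $\Theta'$-semistable locus inside the fibre with the $\Theta_\xi$-semistable nilpotent locus via Ringel simplification, then pass to quotients. But there is a genuine gap in your proof of the equivalence of semistability, specifically in the direction ``$X$ is $\Theta_\xi$-semistable $\Rightarrow$ $W$ is $\Theta'$-semistable.''

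Your computation $\Theta'({\bf dim}\,U_W)=\Theta_\xi({\bf dim}\,U_X)$ is only valid for subrepresentations $U_W\subset W$ that \emph{lie in} $\mathcal{C}$, i.e.\ those with $\Theta({\bf dim}\,U_W)=0$. To test $\Theta'$-semistability of $W$ you must consider \emph{all} subrepresentations, and a general $U_W\subset W$ satisfies only $\Theta({\bf dim}\,U_W)\leq 0$ (since $W$ is $\Theta$-semistable). When $\Theta({\bf dim}\,U_W)<0$ there is no corresponding $Q_\xi$-subobject, and your parenthetical appeal to the abelianness of ${\rm rep}_{\bf C}^{\mu({\bf d})}(Q)$ does not help: that controls $\Theta$-slopes, not $\Theta'$-slopes. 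What is actually needed here---and what the paper invokes explicitly---is condition (1) in the definition of a generic deformation: $\Theta({\bf e})<0\Rightarrow\Theta'({\bf e})<0$. This disposes of all subrepresentations with strictly negative $\Theta$-value in one stroke, leaving only those in $\mathcal{C}$, for which your slope computation applies. Without this step the implication is false (for an arbitrary $\Theta'$ with $\Theta'({\bf d})=0$ one easily builds counterexamples), so the gap is essential rather than cosmetic.
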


\proof As in Section \ref{s22}, the isomorphism
$$\pi^{-1}(V)\simeq G_{\bf d}\times^{G_{{\bf d}_\xi}}N_{{\bf d}_\xi}(Q_\xi)$$
of Theorem \ref{albiso} admits a representation-theoretic interpretation using Ringel simplification: we consider the subcategory $\mathcal{C}^\Theta$ of ${\rm rep}_{\bf C}^{0}(Q)$ of semistable representations of slope $0$ admitting a relative Jordan-H\"older filtration by the stable representations $U_1,\ldots,U_s$. This is an abelian subcategory whose simple objects are the $U_k$. Moreover, the category $\mathcal{C}^\Theta$ is equivalent to the subcategory of ${\rm rep}_{\bf C}(Q_\xi)$ of representations admitting a Jordan-H\"older filtration by the simple representations $S_{i_1},\ldots,S_{i_s}$ of $Q_\xi$, that is, to the subcategory of   nilpotent representations of $Q_\xi$.\\[1ex]
Assume that a $\Theta$-semistable representation $W$ of $Q$ dimension vector ${\bf d}$  corresponds to a nilpotent representation $X$ of $Q_\xi$ of dimension vector ${\bf d}_\xi$ under this equivalence. We claim that $W$ is $\Theta'$-semistable if and only if $X$ is $\Theta_\xi$-semistable.\\[1ex]
So assume that $W$ is $\Theta'$-semistable, and let $U_X\subset X$ be a subrepresentation. Under the above equivalence, it corresponds to a semistable subrepresentation $U_W\subset W$ of slope $0$, thus $U_W$ admits a relative Jordan-Hoelder filtration by the $U_k$, say with multiplicities $m_k'=({\bf dim}U_X)_{i_k}$. Since $V$ is $\Theta'$-semistable, we thus have
$$0\geq \Theta'({\bf dim} U_W)=\sum_{k=1}^s m_l'\Theta'({\bf dim} U_k)=\sum_{k=1}^s m_k'\Theta_\xi(i_k)=\Theta_\xi({\bf dim} U_X).$$

Conversely, assume that $X$ is $\Theta_\xi$-semistable, and let $U_W\subset W$ be a subrepresentation; we have to prove that $\Theta'({\bf dim}U_W)\leq 0$. Since $W$ is $\Theta$-semistable, we know that $\Theta({\bf dim} U_W)\leq 0$. If $\Theta({\bf dim}U_W)<0$, then $\Theta'({\bf dim}U_W)<0$ since $\Theta'$ is a generic deformation of $\Theta$, so there is nothing to prove. So assume that $\Theta({\bf dim}U_W)=0$. Thus $U_W$ is $\Theta$-semistable of slope $0$, thus admits a relative Jordan-Hoelder filtration by the stables $U_k$, say with multiplicities $m_k'$. Again under the above equivalence $U_W$ thus corresponds to a subrepresentation $U_X$ of $X$, and as above, we can conclude $\Theta'({\bf dim}U_W)=\Theta_\xi({\bf dim}U_X)\leq 0$, finishing the proof of the claim.\\[1ex]
In geometric terms, we have thus proved that the isomorphism

$$\pi^{-1}(V)\simeq G_{\bf d}\times^{G_{{\bf d}_\xi}}N_{{\bf d}_\xi}(Q_\xi)$$
restricts to an isomorphism of semistable loci

$$\pi^{-1}(V)\cap R_{\bf d}^{\Theta'-\rm sst}(Q)\simeq G_{\bf d}\times^{G_{{\bf d}_\xi}}(N_{{\bf d}_\xi}(Q_\xi)\cap R_{{\bf d}_\xi}^{\Theta_\xi-\rm sst}(Q_\xi)).$$

We consider the commutative square
$$\begin{array}{ccc} R_{\bf d}^{\Theta'-\rm sst}(Q)&\rightarrow&R_{\bf d}^{\Theta-\rm sst}(Q)\\ \pi'\downarrow&&\downarrow \pi\\
M_{\bf d}^{\Theta'-\rm sst}(Q)&\stackrel{p}{\rightarrow}& M_{\bf d}^{\Theta-\rm sst}(Q),\end{array}$$
with the vertical maps being $G_{\bf d}$-quotients, the upper horizonal map being the open inclusion, and the lower horizontal map being the desingularization map $p$. Using this diagram and the previous identification of semistable loci, we can compute the fibre $p^{-1}(V)$ as:
\begin{eqnarray*}
p^{-1}(V)&\simeq&\pi'^{-1}(p^{-1}(V))//G_{\bf d}\\
&\simeq& (\pi^{-1}(V)\cap R_{\bf d}^{\Theta'-\rm sst}(Q))//G_{\bf d}\\
&\simeq&(G_{\bf d}\times^{G_{{\bf d}_\xi}}(N_{{\bf d}_\xi}(Q_\xi)\cap R_{{\bf d}_\xi}^{\Theta_\xi-\rm sst})(Q))//G_{\bf d}\\
&\simeq& (N_{{\bf d}_\xi}(Q_\xi)\cap R_{{\bf d}_\xi}^{\Theta_\xi-\rm sst}(Q))//G_\xi\\
&\simeq& M_{{\bf d}_\xi}^{\Theta_\xi-\rm sst,nilp}(Q_\xi),\end{eqnarray*}
proving the theorem.

\section{Smallness}\label{s4}

The aim of this section is to prove that the desingularization map constructed in Section \ref{s41} is small under the additional assumption that the restriction of the Euler form to the kernel of the stability is symmetric. We first recall in detail a dimension estimate for the nullcone of representations of a symmetric quiver from \cite{MeR}, and then use the description of the fibres of $p$ from the previous section to prove smallness.

\subsection{A dimension estimate for moduli of semistable nilpotent representations}\label{s41}

The main observation of this section is that, under the assumption of $Q$ being symmetric and ${\bf d}$ being $\Theta$-coprime, there is an effective estimate for the dimension of $M_{\bf d}^{\Theta-\rm sst,nilp}(Q)$. We first estimate the dimension of the nullcone following \cite{MeR}:

\begin{theorem}\label{nilpotent stack dimension} If $Q$ is symmetric, we have
$$\dim N_{\bf d}(Q)-\dim G_{\bf d}\leq -\frac{1}{2}\langle {\bf d},{\bf d}\rangle+\frac{1}{2}\sum_{i\in Q_0}\langle i,i\rangle d_i-\dim {\bf d}.$$
\end{theorem}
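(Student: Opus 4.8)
The plan is to estimate $\dim N_{\bf d}(Q)$ via the Hilbert criterion, bounding the contribution of each one-parameter subgroup of $G_{\bf d}$ by a dimension count, and then to use the symmetry of $\langle\_,\_\rangle$ to collapse that count to an elementary inequality on dimension vectors.

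First I would recall, as in Section~\ref{s22}, that a representation $V$ lies in $N_{\bf d}(Q)=\pi^{-1}(0)$ precisely when $0\in\overline{G_{\bf d}V}$, which by the Hilbert criterion means $\lim_{t\to 0}\lambda(t)V=0$ for some one-parameter subgroup $\lambda$ of $G_{\bf d}$. After conjugation we may assume $\lambda$ lies in the diagonal torus, so it is given by a ${\bf Z}$-grading ${\bf C}^{d_i}=\bigoplus_n V_i^{(n)}$ for each $i\in Q_0$; write ${\bf d}^{(n)}$ for the dimension vector $i\mapsto\dim V_i^{(n)}$, so $\sum_n{\bf d}^{(n)}={\bf d}$. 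Then $Z_\lambda:=\{V:\lim_{t\to 0}\lambda(t)V=0\}$ is the linear subspace of those $V$ whose structure maps $f_\alpha$ ($\alpha:i\to j$) send $V_i^{(n)}$ into $\bigoplus_{m>n}V_j^{(m)}$, and $Z_\lambda$ is stable under the parabolic $P_\lambda=\{g\in G_{\bf d}:\lim_{t\to 0}\lambda(t)g\lambda(t)^{-1}\text{ exists}\}$. Since $G_{\bf d}\times^{P_\lambda}Z_\lambda$ surjects onto $G_{\bf d}\cdot Z_\lambda$, this gives $\dim(G_{\bf d}\cdot Z_\lambda)\le\dim G_{\bf d}-\dim P_\lambda+\dim Z_\lambda$; and since $Z_\lambda$ and $P_\lambda$ depend only on the ordered decomposition ${\bf d}=\sum_n{\bf d}^{(n)}$, of which there are finitely many, $N_{\bf d}(Q)$ is a finite union of such sets $G_{\bf d}\cdot Z_\lambda$, so it suffices to bound $\dim Z_\lambda-\dim P_\lambda$ for each $\lambda$.

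The next step is the explicit count
$$\dim Z_\lambda=\sum_{\alpha:i\to j}\sum_{m>n}d_i^{(n)}d_j^{(m)},\qquad\dim P_\lambda=\sum_{i\in Q_0}\sum_{m\ge n}d_i^{(m)}d_i^{(n)},$$
after which I would substitute $\sum_{\alpha:i\to j}d_i^{(n)}d_j^{(m)}=\sum_{i\in Q_0}d_i^{(n)}d_i^{(m)}-\langle{\bf d}^{(n)},{\bf d}^{(m)}\rangle$, cancel the common terms, and obtain
$$\dim Z_\lambda-\dim P_\lambda=-\sum_{m>n}\langle{\bf d}^{(n)},{\bf d}^{(m)}\rangle-\sum_n\sum_{i\in Q_0}(d_i^{(n)})^2.$$
Here the hypothesis that $\langle\_,\_\rangle$ is symmetric enters: together with $\sum_n{\bf d}^{(n)}={\bf d}$ it yields $\sum_{m>n}\langle{\bf d}^{(n)},{\bf d}^{(m)}\rangle=\frac{1}{2}\langle{\bf d},{\bf d}\rangle-\frac{1}{2}\sum_n\langle{\bf d}^{(n)},{\bf d}^{(n)}\rangle$, hence
$$\dim Z_\lambda-\dim P_\lambda=-\frac{1}{2}\langle{\bf d},{\bf d}\rangle+\sum_n\left(\frac{1}{2}\langle{\bf d}^{(n)},{\bf d}^{(n)}\rangle-\sum_{i\in Q_0}(d_i^{(n)})^2\right).$$
Comparing with the asserted bound, it then remains to prove that for every ${\bf e}\in\Lambda^+$ one has $\frac{1}{2}\langle{\bf e},{\bf e}\rangle-\sum_i e_i^2\le\frac{1}{2}\sum_i\langle i,i\rangle e_i-\dim{\bf e}$, and to apply this with ${\bf e}={\bf d}^{(n)}$ and sum over $n$.

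Finally, writing $a_{ij}$ for the number of arrows $i\to j$ (so $\langle{\bf e},{\bf e}\rangle=\sum_i e_i^2-\sum_{i,j}a_{ij}e_ie_j$ and $\langle i,i\rangle=1-a_{ii}$), this last inequality reduces to $\sum_i e_i+\sum_i a_{ii}e_i\le\sum_i e_i^2+\sum_{i,j}a_{ij}e_ie_j$, which holds because $e_i^2\ge e_i$ for every non-negative integer $e_i$ and all cross terms $a_{ij}e_ie_j$ are non-negative. I expect the main obstacle to be purely organizational --- getting the dimension counts for $Z_\lambda$ and $P_\lambda$ right and using symmetry to fold the ``$m>n$'' sum into $\frac{1}{2}\langle{\bf d},{\bf d}\rangle$ --- after which the estimate collapses to the elementary fact above; note that it is already an equality whenever every ${\bf d}^{(n)}$ has all entries in $\{0,1\}$ (for instance for a quiver with one vertex and several loops), so this method cannot be pushed any further.
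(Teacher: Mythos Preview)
Your argument is correct and follows essentially the same route as the paper: cover $N_{\bf d}(Q)$ by the sets $G_{\bf d}Z_\lambda$ coming from the Hilbert criterion, bound each by $\dim G_{\bf d}-\dim P_\lambda+\dim Z_\lambda$, and use the symmetry of $\langle\_,\_\rangle$ to fold the off-diagonal sum into $\tfrac{1}{2}\langle{\bf d},{\bf d}\rangle$. The only organizational difference is that the paper restricts from the outset to \emph{thin} decompositions (each ${\bf d}^k$ one-dimensional), for which your final inequality $\tfrac{1}{2}\langle{\bf e},{\bf e}\rangle-\sum_ie_i^2\le\tfrac{1}{2}\sum_i\langle i,i\rangle e_i-\dim{\bf e}$ is an identity, so no extra estimate is needed; you instead carry general ${\bf d}^{(n)}$ and close the gap with that elementary inequality at the end --- as you yourself observe, equality holds precisely in the thin case, so the two arguments meet at the same optimum.
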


\begin{proof} Fix a decomposition ${\bf d}={\bf d}^1+\ldots+{\bf d}^s$, denoted by ${\bf d}^*$, and fix a flag $$F^*=F^0\subset F^1\subset\ldots\subset F^s=\bigoplus_{i\in Q_0}{\bf C}^{d_i}$$ of $Q_0$-graded subspaces $F^k=\bigoplus_{i\in Q_0}F^k_i$ such that ${\bf dim}F^k/F^{k-1}={\bf d}^k$ for all $k=1,\ldots,s$.\\[1ex]
Let $Z_{{\bf d}^*}$ be the closed subvariety of $R_{\bf d}(Q)$ consisting of all representations $V=(f_\alpha)_\alpha$ of $Q$ on  $\bigoplus_{i\in Q_0}{\bf C}^{d_i}$ which are compatible with $F^*$, in the sense that $$f_\alpha(F^k_i)\subset F^{k-1}_j\mbox{ for all }\alpha:i\rightarrow j\mbox{ and all }k=1,\ldots,s.$$
Let $P_{{\bf d}^*}$ be the parabolic subgroup of $G_{\bf d}$ of all automorphisms $(g_i)_i$ respecting $F^*$, in the sense that $g_i(F^k_i)\subset F^k_i$ for all $i$ and all $k$.\\[1ex]
We call ${\bf d}^*$ thin if $\dim {\bf d}^k=1$ for all $k$. By definition (that is, by the Hilbert criterion) we have $$N_{\bf d}(Q)=\bigcup_{{\bf d}^*}G_{\bf d}Z_{{\bf d}^*},$$where the union ranges over all thin decompositions ${\bf d}^*$ of ${\bf d}$. Moreover, we have a proper surjective map $$G_{\bf d}\times^{P_{{\bf d}^*}}Z_{{\bf d}^*}\rightarrow G_{\bf d}Z_{{\bf d}^*}.$$ Using this map, the dimension of $G_{\bf d}Z_{{\bf d}^*}$ is easily estimated as
$$\dim G_{\bf d}Z_{{\bf d}^*}\leq\dim G_{\bf d}-\dim P_{{\bf d}^*}+\dim Z_{{\bf d}^*}\leq \dim G_d-\sum_{k<l}\langle d^l,d^k\rangle-\sum_i\sum_k(d^k_i)^2.$$

Since $Q$ is symmetric, we can rewrite
$$\sum_{k<l}\langle {\bf d}^l,{\bf d}^k\rangle=\frac{1}{2}\langle {\bf d},{\bf d}\rangle-\frac{1}{2}\sum_k\langle {\bf d}^k,{\bf d}^k\rangle.$$
All ${\bf d}^k$ being one-dimensional, we can easily rewrite
$$\sum_i\sum_k(d^k_i)^2=\dim {\bf d},\;\;\;
\sum_k\langle {\bf d}^k,{\bf d}^k\rangle=\sum_i\langle i,i\rangle d_i.$$
Thus we can rewrite the above estimate as
$$\dim G_{\bf d}Z_{{\bf d}^*}\leq-\frac{1}{2}\langle{\bf d},{\bf d}\rangle+\frac{1}{2}\sum_i\langle i,i\rangle d_i-\dim{\bf d}.$$
Since $N_{\bf d}(Q)$ is the union of all such $G_{\bf d}Z_{{\bf d}^*}$, the theorem follows.
\end{proof}

\begin{corollary}\label{cor} If $Q$ is symmetric and ${\bf d}$ is $\Theta$-coprime, we have
$$\dim M_{\bf d}^{\Theta-\rm sst,nilp}(Q)\leq -\frac{1}{2}\langle{\bf d},{\bf d}\rangle+\frac{1}{2}\sum_i\langle i,i\rangle d_i-\dim {\bf d}+1.$$
\end{corollary}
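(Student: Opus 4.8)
The plan is to transfer the nullcone estimate of Theorem \ref{nilpotent stack dimension} to the moduli space through the quotient map, using that $\Theta$-coprimality forces the semistable locus to be stable. First I would invoke the fact, recalled in Section \ref{s24}, that since ${\bf d}$ is $\Theta$-coprime we have $R_{\bf d}^{\Theta-\rm sst}(Q)=R_{\bf d}^{\Theta-\rm st}(Q)$, so the quotient morphism $\pi:R_{\bf d}^{\Theta-\rm sst}(Q)\rightarrow M_{\bf d}^{\Theta-\rm sst}(Q)$ is a $PG_{\bf d}$-principal bundle; in particular all its fibres are single $PG_{\bf d}$-orbits of dimension $\dim G_{\bf d}-1$.

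Next I would use the alternative description $M_{\bf d}^{\Theta-\rm sst,nilp}(Q)=(N_{\bf d}(Q)\cap R_{\bf d}^{\Theta-\rm sst}(Q))//G_{\bf d}$ noted in Section \ref{s24}: the subvariety $N_{\bf d}(Q)\cap R_{\bf d}^{\Theta-\rm sst}(Q)$ is $G_{\bf d}$-stable and locally closed, and it is exactly the $\pi$-preimage of $M_{\bf d}^{\Theta-\rm sst,nilp}(Q)$. Assuming this moduli space is non-empty (otherwise there is nothing to prove), restricting the principal bundle $\pi$ to $N_{\bf d}(Q)\cap R_{\bf d}^{\Theta-\rm sst}(Q)$ gives $\dim\bigl(N_{\bf d}(Q)\cap R_{\bf d}^{\Theta-\rm sst}(Q)\bigr)=\dim M_{\bf d}^{\Theta-\rm sst,nilp}(Q)+\dim G_{\bf d}-1$.

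Finally, $N_{\bf d}(Q)\cap R_{\bf d}^{\Theta-\rm sst}(Q)$ is an open subvariety of $N_{\bf d}(Q)$, hence has dimension at most $\dim N_{\bf d}(Q)$, and combining this with the displayed identity and Theorem \ref{nilpotent stack dimension} yields $$\dim M_{\bf d}^{\Theta-\rm sst,nilp}(Q)\le\dim N_{\bf d}(Q)-\dim G_{\bf d}+1\le-\tfrac{1}{2}\langle{\bf d},{\bf d}\rangle+\tfrac{1}{2}\sum_{i\in Q_0}\langle i,i\rangle d_i-\dim{\bf d}+1,$$ which is the claim. The computation is essentially bookkeeping; the only point that genuinely needs attention is the principal bundle structure of $\pi$ over the semistable locus, which is precisely where the $\Theta$-coprimality hypothesis is used, together with the harmless step of excluding the empty moduli space at the outset.
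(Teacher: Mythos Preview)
Your argument is correct and follows essentially the same route as the paper: use $\Theta$-coprimality to make the quotient a $PG_{\bf d}$-principal bundle, restrict it to the nilpotent locus, and then feed in the estimate of Theorem \ref{nilpotent stack dimension}. If anything you are slightly more explicit than the paper in noting that $N_{\bf d}(Q)\cap R_{\bf d}^{\Theta-\rm sst}(Q)$ is open in $N_{\bf d}(Q)$ (hence has dimension at most $\dim N_{\bf d}(Q)$) and in handling the empty case separately.
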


\begin{proof} The quotient $$R_{\bf d}^{\Theta-\rm sst}(Q)\rightarrow M_{\bf d}^{\Theta-\rm sst}(Q)$$ being geometric by coprimality, that is, being a $PG_{\bf d}$-principal bundle, the same holds for the restriction to the nilpotent locus $$N_{\bf d}(Q)\cap R_{\bf d}^{\Theta-\rm sst}(Q)\rightarrow M_{\bf d}^{\Theta-\rm sst,nilp}(Q).$$
Thus $$\dim M_{\bf d}^{\Theta-\rm sst,nilp}(Q)=\dim N_{\bf d}(Q)-\dim G_{\bf d}+1.$$
Using the estimate of Theorem \ref{nilpotent stack dimension}, the statement follows.
\end{proof}

\subsection{Main theorem}\label{s42}

Recall that a desingularization map $f:Y\rightarrow X$ of an irreducible variety $X$ is called small if there exists a decomposition of $X$ into finitely many locally closed strata $X_i$, over which $f$ is \'etale locally trivial, such that for all $x\in X_i$, we have the estimate
$$\dim f^{-1}(x)\leq\frac{1}{2}{\rm codim}_XX_i,$$
with equality only for the dense stratum.\\[1ex]
We can now prove the main result.

\begin{theorem}\label{maintheorem} Assume that a quiver $Q$, a dimension vector ${\bf d}$ and stabilities $\Theta$ and $\Theta'$ are given such that:
\begin{enumerate}
\item $M_{\bf d}^{\Theta-\rm st}(Q)\not=\emptyset$,
\item $\Theta({\bf d})=0$, and $\Theta'$ is a generic deformation of $\Theta$,
\item the restriction of $\langle\_,\_\rangle$ to ${\rm Ker}(\Theta)$ is symmetric.
\end{enumerate}

Then the desingularisation $p:M_{\bf d}^{\Theta'-\rm sst}(Q)\rightarrow M_{\bf d}^{\Theta-\rm sst}(Q)$ constructed in  is small.
\end{theorem}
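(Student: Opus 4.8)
The plan is to combine the fibre description from Theorem~\ref{albiso} with the dimension estimate of Corollary~\ref{cor}, checking that the strata $S_\xi$ provide a stratification over which $p$ is \'etale locally trivial and that the codimension inequality holds with equality only on the dense stratum. First I would recall that, by Theorem~\ref{albiso}, over a point $V$ in the Luna stratum $S_\xi$ associated to a decomposition type $\xi=(({\bf d}^1,m_1),\ldots,({\bf d}^s,m_s))$, the fibre satisfies $p^{-1}(V)\simeq M_{{\bf d}_\xi}^{\Theta_\xi-\rm sst,nilp}(Q_\xi)$, and that ${\bf d}_\xi$ is $\Theta_\xi$-coprime; \'etale local triviality of $p$ over $S_\xi$ follows from the \'etale local triviality of the quotient map $\pi$ over $S_\xi$ (cited from \cite{ALB}) together with the commutative square relating $\pi$, $\pi'$ and $p$ used in the proof of Theorem~\ref{albiso}. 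So the strata $X_i$ in the definition of smallness are exactly the Luna strata $S_\xi$, and the dense one is $M_{\bf d}^{\Theta-\rm st}(Q)=S_{(({\bf d},1))}$, which is nonempty by hypothesis (1).

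Next I would need two numerical inputs. The codimension of $S_\xi$ in $M_{\bf d}^{\Theta-\rm sst}(Q)$: since $M_{\bf d}^{\Theta-\rm sst}(Q)$ has dimension $1-\langle{\bf d},{\bf d}\rangle$ and (by the slice description, or by the analogous computation in \cite{ALB,LBPSS}) the stratum $S_\xi$ has dimension $1-\langle{\bf d}_\xi,{\bf d}_\xi\rangle_{Q_\xi}$ — i.e. it is, up to the same shift, the dimension of the corresponding stratum for the local quiver — we get $\codim S_\xi=\langle{\bf d}_\xi,{\bf d}_\xi\rangle_{Q_\xi}-\langle{\bf d},{\bf d}\rangle$. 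Here I must be careful: the relevant symmetry hypothesis is that $\langle\_,\_\rangle$ restricted to $\Ker(\Theta)$ is symmetric, and since all ${\bf d}^k$ have slope $\mu({\bf d})$ while $\Theta({\bf d})=0$, the dimension vectors ${\bf d}^k$ lie in $\Ker(\Theta)$; hence the Euler form of $Q_\xi$, whose matrix is $\delta_{k,l}-\langle{\bf d}^k,{\bf d}^l\rangle$, inherits symmetry, so $Q_\xi$ is a symmetric quiver. Thus Corollary~\ref{cor} applies to $M_{{\bf d}_\xi}^{\Theta_\xi-\rm sst,nilp}(Q_\xi)$ and gives
$$\dim p^{-1}(V)\leq-\frac{1}{2}\langle{\bf d}_\xi,{\bf d}_\xi\rangle_{Q_\xi}+\frac{1}{2}\sum_{k}\langle i_k,i_k\rangle_{Q_\xi}(d_\xi)_{i_k}-\dim{\bf d}_\xi+1.$$

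The core of the proof is then the elementary comparison of these two quantities. Using $\langle i_k,i_k\rangle_{Q_\xi}=\delta_{k,k}-\langle{\bf d}^k,{\bf d}^k\rangle=1-\langle{\bf d}^k,{\bf d}^k\rangle$, $(d_\xi)_{i_k}=m_k$, $\dim{\bf d}_\xi=\sum_k m_k$, and $\langle{\bf d}_\xi,{\bf d}_\xi\rangle_{Q_\xi}=\sum_{k,l}m_km_l(\delta_{k,l}-\langle{\bf d}^k,{\bf d}^l\rangle)=\sum_k m_k^2-\langle{\bf d},{\bf d}\rangle$ (since $\sum_k m_k{\bf d}^k={\bf d}$), I would expand both sides and reduce the desired inequality $\dim p^{-1}(V)\leq\frac{1}{2}\codim S_\xi$ to a statement purely about the $m_k$ and $\langle{\bf d}^k,{\bf d}^k\rangle$. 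I expect everything involving $\langle{\bf d},{\bf d}\rangle$ to cancel, leaving something like $-\frac{1}{2}\sum_k m_k^2+\frac12\sum_k m_k(1-\langle{\bf d}^k,{\bf d}^k\rangle)-\sum_k m_k+1\leq\frac12\sum_k m_k^2-\frac12\langle{\bf d}_\xi,{\bf d}_\xi\rangle_{Q_\xi}$... which I would simplify until it becomes an obviously true inequality in nonnegative integers, with equality precisely when there is a single block with $m_1=1$, i.e. on the dense stratum. The main obstacle, and the place I would spend the most care, is getting this bookkeeping exactly right — in particular pinning down $\dim S_\xi$ (equivalently verifying that the slice at $V$ is $R_{{\bf d}_\xi}^{\Theta_\xi-\rm sst}(Q_\xi)$ up to a smooth factor, so that $\codim S_\xi$ really equals $\langle{\bf d}_\xi,{\bf d}_\xi\rangle_{Q_\xi}-\langle{\bf d},{\bf d}\rangle$) and confirming the equality case isolates only $\xi=(({\bf d},1))$; the rest is routine substitution.
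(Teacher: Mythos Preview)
Your overall strategy matches the paper's: stratify by Luna type, use Theorem~\ref{albiso} for the fibres, apply Corollary~\ref{cor}, and compare with $\codim S_\xi$. But the step you yourself flag as the main obstacle --- computing $\dim S_\xi$ --- is wrong in two compounding ways.

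First, an arithmetic slip: the quantity $\delta_{k,l}-\langle{\bf d}^k,{\bf d}^l\rangle$ is the \emph{number of arrows} from $i_k$ to $i_l$ in $Q_\xi$, not the Euler form. The Euler form is $\langle i_k,i_l\rangle_{Q_\xi}=\delta_{k,l}-(\mbox{arrows})=\langle{\bf d}^k,{\bf d}^l\rangle$. Correcting this gives $\langle{\bf d}_\xi,{\bf d}_\xi\rangle_{Q_\xi}=\langle{\bf d},{\bf d}\rangle$ and $\langle i_k,i_k\rangle_{Q_\xi}=\langle{\bf d}^k,{\bf d}^k\rangle$, so the bound from Corollary~\ref{cor} reads
$$\dim p^{-1}(V)\leq-\tfrac{1}{2}\langle{\bf d},{\bf d}\rangle+\tfrac{1}{2}\sum_k\langle{\bf d}^k,{\bf d}^k\rangle m_k-\sum_k m_k+1.$$

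Second, and more seriously, your formula $\dim S_\xi=1-\langle{\bf d}_\xi,{\bf d}_\xi\rangle_{Q_\xi}$ is the wrong reading of the slice theorem. The \'etale local model $M_{{\bf d}_\xi}^{\rm ssimp}(Q_\xi)$ describes the \emph{whole} moduli space near $V$, not the stratum; indeed with the corrected arithmetic your formula yields $\codim S_\xi=0$ for every $\xi$, which is absurd. Under the slice identification, $S_\xi$ corresponds to the \emph{lowest} Luna stratum of $M_{{\bf d}_\xi}^{\rm ssimp}(Q_\xi)$ (decomposition type $((i_1,m_1),\ldots,(i_s,m_s))$), not the generic one. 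The paper instead computes $\dim S_\xi$ directly: the map sending a tuple $(U_1,\ldots,U_s)$ of pairwise non-isomorphic $\Theta$-stables of dimension vectors ${\bf d}^k$ to $\bigoplus_k U_k^{m_k}$ surjects onto $S_\xi$, giving
$$\dim S_\xi\leq\sum_{k=1}^s(1-\langle{\bf d}^k,{\bf d}^k\rangle),\qquad{\rm codim}\,S_\xi\geq 1-\langle{\bf d},{\bf d}\rangle-\sum_{k=1}^s(1-\langle{\bf d}^k,{\bf d}^k\rangle).$$
Combining this with the fibre bound yields
$$\dim p^{-1}(V)-\tfrac{1}{2}{\rm codim}\,S_\xi\leq-\tfrac{1}{2}\sum_k(1-\langle{\bf d}^k,{\bf d}^k\rangle)(m_k-1)-\tfrac{1}{2}\Bigl(\sum_k m_k-1\Bigr)\leq 0,$$
with equality precisely for $\xi=(({\bf d},1))$. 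So once you replace your slice-based codimension claim with this direct parametrization of $S_\xi$, the remaining bookkeeping goes through exactly as you anticipated.
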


\begin{proof} Let $V$ be a point in the Luna stratum $S_\xi$ of $M_{\bf d}^{\Theta-\rm sst}(Q)$, thus $$p^{-1}(V)\simeq M_{{\bf d}_\xi}^{\Theta_\xi-\rm sst,nilp}(Q_\xi).$$
Under the assumptions of the theorem, the conditions of Corollary \ref{cor} are fulfilled (that is, $Q_\xi$ is symmetric, and ${\bf d}_\xi$ is $\Theta_\xi$-coprime), thus we have the estimate
$$\dim p^{-1}(V)\leq -\frac{1}{2}\langle{\bf d}_\xi,{\bf d}_\xi\rangle_{Q_\xi}+\frac{1}{2}\sum_{k=1}^s\langle i_k,i_k\rangle_{Q_\xi}({\bf d}_\xi)_{i_k}-\dim {\bf d}_\xi+1.$$
Using the definition of $Q_\xi$ and ${\bf d}_\xi$, this can be rewritten in terms of $Q$, ${\bf d}$ and $\xi=(({\bf d}^k,m_k))_k$ as
$$\dim p^{-1}(V)\leq -\frac{1}{2}\langle{\bf d},{\bf d}\rangle+\frac{1}{2}\sum_{k=1}^s\langle {\bf d}^k,{\bf d}^k\rangle m_k-\sum_{k=1}^sm_k+1.$$
We reformulate this estimate as
$$\dim p^{-1}(V)\leq -\frac{1}{2}\langle{\bf d},{\bf d}\rangle-\frac{1}{2}\sum_{k=1}^s(1-\langle {\bf d}^k,{\bf d}^k\rangle) m_k-\frac{1}{2}\sum_{k=1}^sm_k+1.$$

Next we estimate the (co-)dimension of the Luna stratum $S_\xi$. Inside the product of moduli spaces of stable representations $$\prod_{k=1}^sM_{{\bf d}^k}^{\Theta-\rm st}(Q),$$
we have the open subset $Y$ of tuples $(U_1,\ldots,U_s)$ of pairwise non-isomorphic stable representations. Mapping such a tuple  to the representation $\bigoplus_{k=1}^sU_k^{m_k}$ defines a surjective map $Y\rightarrow S_\xi$, thus $$\dim S_\xi\leq\dim Y=\sum_{k=1}^s\dim M_{{\bf d}^k}^{\Theta-\rm st}(Q)=\sum_{k=1}^s(1-\langle{\bf d}^k,{\bf d}^k\rangle)$$
since if $S_\xi\not=\emptyset$, then $M_{{\bf d}^k}^{\Theta-\rm st}(Q)\not=\emptyset$ for all $k$. Thus
$${\rm codim}_{M_{\bf d}^{\Theta-\rm sst}(Q)}S_\xi\geq 1-\langle{\bf d},{\bf d}\rangle-\sum_{k=1}^s(1-\langle{\bf d}^k,{\bf d}^k\rangle).$$

Combining these two estimates, we have
$$\dim p^{-1}(V)-\frac{1}{2}{\rm codim}_{M_{\bf d}^{\Theta-\rm sst}(Q)}S_\xi\leq$$
$$\leq -\frac{1}{2}\sum_{k=1}^s(1-\langle{\bf d}^k,{\bf d}^k\rangle)(m_k-1)-\frac{1}{2}(\sum_{k=1}^sm_k-1)\leq 0$$
since $1-\langle{\bf d}^k,{\bf d}^k\rangle=\dim M_{{\bf d}^k}^{\Theta-\rm st}(Q)\geq 0$ and $m_k\geq 1$ for all $k$, with equality holding if and only if $\xi$ is trivial. This finishes the proof of smallness.

\end{proof}

\subsection{Consequences}\label{s43}

As an immediate corollary to the smallness of the desingularization map $p$, we find:

\begin{corollary} Keep the assumptions of Theorem \ref{maintheorem}. Then we have the following formula for the compactly supported intersection cohomology of the moduli space $M_{\bf d}^{\Theta-\rm sst}(Q)$:

$$\sum_i\dim{\rm IH}_c^i(M_{\bf d}^{\Theta-\rm sst}(Q),{\bf Q})(-q^{1/2})^i=$$
$$=(q-1)\sum_{{\bf d}^*}(-1)^{s-1}q^{-\sum_{k\leq l}\langle{\bf d}^l,{\bf d}^k\rangle}\prod_{k=1}^s\prod_{i\in Q_0}\prod_{j=1}^{d^k_i}(1-q^{-j})^{-1},$$
where the sum ranges over all ordered decompositions ${\bf d}={\bf d}^1+\ldots+{\bf d}^s$ into non-zero dimension vectors such that
$$\Theta'({\bf d}^1+\ldots+{\bf d}^k)>0$$
for all $k<s$.\\[1ex]
In particular, the odd intersection cohomology of $M_{\bf d}^{\Theta-\rm sst}(Q)$ vanishes.
\end{corollary}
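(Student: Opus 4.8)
The plan is to combine the smallness of $p$ (Theorem \ref{maintheorem}) with the standard behaviour of intersection cohomology under small maps, and then to identify the resulting Betti numbers using the already-established formula for ordinary cohomology of the smooth source.

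First I would invoke the decomposition theorem for the small proper map $p:M_{\bf d}^{\Theta'-\rm sst}(Q)\rightarrow M_{\bf d}^{\Theta-\rm sst}(Q)$: since $M_{\bf d}^{\Theta'-\rm sst}(Q)$ is smooth (by Proposition \ref{desing}, $\Theta'$-semistability equals $\Theta'$-stability), the constant sheaf on it (suitably shifted) is the intersection complex, and smallness guarantees that $Rp_*$ of this intersection complex is again the intersection complex of the base with no other summands. Hence $H^*_c(M_{\bf d}^{\Theta'-\rm sst}(Q),{\bf Q})\cong {\rm IH}^*_c(M_{\bf d}^{\Theta-\rm sst}(Q),{\bf Q})$ as graded vector spaces. (One should note here that $M_{\bf d}^{\Theta-\rm sst}(Q)$ is irreducible, being the image under a proper birational map of the irreducible variety $M_{\bf d}^{\Theta'-\rm sst}(Q)$, so the notion of intersection cohomology and smallness applies as stated.)

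Second, since ${\bf d}$ is $\Theta'$-coprime (part of the definition of generic deformation), Theorem \ref{bettiformula} applies to the smooth projective-over-affine moduli space $M_{\bf d}^{\Theta'-\rm sst}(Q)$, giving
$$\sum_i\dim H^i_c(M_{\bf d}^{\Theta'-\rm sst}(Q),{\bf Q})(-q^{1/2})^i=(q-1)p_{\bf d}^{\Theta'}(q),$$
where $p_{\bf d}^{\Theta'}(q)$ is the rational function defined in Section \ref{s25} but with the slope condition $\mu({\bf d}^1+\ldots+{\bf d}^k)>\mu({\bf d})$ taken with respect to $\Theta'$. Because $\Theta'({\bf d})=0$, this slope condition is exactly $\Theta'({\bf d}^1+\ldots+{\bf d}^k)>0$, so the sum ranges precisely over the ordered decompositions described in the statement, and $(q-1)p_{\bf d}^{\Theta'}(q)$ is the claimed expression. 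Combining with the isomorphism from the first step yields the stated formula for ${\rm IH}^*_c(M_{\bf d}^{\Theta-\rm sst}(Q),{\bf Q})$.

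Finally, vanishing of odd intersection cohomology follows because $(q-1)p_{\bf d}^{\Theta'}(q)$ is a polynomial in $q$ (integral powers only) with non-negative coefficients: it computes $\dim H^i_c$ of the smooth variety $M_{\bf d}^{\Theta'-\rm sst}(Q)$, which by Theorem \ref{bettiformula} is concentrated in even degrees (each appearance of $(-q^{1/2})^i$ with a nonzero coefficient forces $i$ even). The main obstacle I anticipate is the careful invocation of the decomposition theorem in the context of the (possibly noncompact, singular) base: one must ensure that smallness as defined here — an estimate on fibre dimensions over a finite locally closed stratification with étale-local triviality — is exactly the hypothesis needed to conclude $Rp_*{\rm IC}={\rm IC}$, and that compactly supported intersection cohomology transforms as stated; this is standard, but it is the step where one is implicitly using the full strength of the stratified description of the fibres from Theorem \ref{albiso} together with the étale-local triviality over Luna strata.
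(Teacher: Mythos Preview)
Your proposal is correct and follows essentially the same approach as the paper: invoke the standard fact that a small desingularization $p:X\rightarrow Y$ yields ${\rm IH}_c^*(Y,{\bf Q})\simeq H_c^*(X,{\bf Q})$, then apply Theorem \ref{bettiformula} to the $\Theta'$-coprime moduli space. The paper's own proof is in fact a one-line invocation of exactly these two ingredients; your additional remarks on irreducibility, the normalization $\Theta'({\bf d})=0$, and the deduction of odd-degree vanishing are correct elaborations that the paper leaves implicit.
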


\proof If $f:X\rightarrow Y$ is a small desingularization, we have ${\rm IH}_c^*(Y,{\bf Q})\simeq H_c^*(X,{\bf Q})$. Applying this to the desingularization map $p$ and using the formula in Theorem \ref{bettiformula} for the Betti numbers of moduli spaces of semistable representations in the coprime case, the claimed formula follows.\\[2ex]
An explanation for the existence of our small resolutions in terms of Donaldson-Thomas theory can be derived from an observation in \cite{MICRA}: whenever the Euler form is symmetric on ${\rm Ker}(\Theta)$ and $\Theta'$ is a generic deformation of $\Theta$, we have ${\rm DT}_{\bf d}^{\Theta'}(Q)={\rm DT}_{\bf d}^{\Theta}(Q)$, and both invariants compute intersection cohomology by Theorem \ref{intersectionbettiformula}.

\section{Numerical aspects}\label{s5}

In this section, we discuss in which generality our main result is applicable. We first prove that generic deformations of stabilities, with respect to an indivisible dimension vector, always exist. Then we analyze the class of quivers and stabilities for which the restriction of the Euler form to the kernel of the stability function is symmetric: it turns out that such a stability exists whenever the antisymmetrized Euler form of the quiver has rank at most two.

\begin{theorem}\label{theorem51} For an indivisible dimension vector ${\bf d}$ and a stability $\Theta$ such that $\Theta({\bf d})=0$, there exists a generic deformation of $\Theta$.
\end{theorem}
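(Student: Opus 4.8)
The plan is to produce $\Theta'$ in the form $\Theta' = N\Theta + \Psi$, where $N$ is a large positive integer and $\Psi$ is an auxiliary stability chosen generically so that ${\bf d}$ is already $\Psi$-coprime. The first step is to note that the set $E = \{{\bf e}\in\Lambda^+ : 0\neq{\bf e}\lneqq{\bf d}\}$ is finite, since each coordinate $e_i$ is confined to $\{0,1,\ldots,d_i\}$ and $Q_0$ is finite. Partition $E$ as $E_-\cup E_0\cup E_+$ according to the sign of $\Theta({\bf e})$. Using the invariance remark of Section~\ref{s23} we may insist that $\Theta'({\bf d})=0$, and then unwinding the definition of a generic deformation shows that what must be arranged is precisely: $\Theta'({\bf e})<0$ for ${\bf e}\in E_-$, $\Theta'({\bf e})>0$ for ${\bf e}\in E_+$, and $\Theta'({\bf e})\neq 0$ for all ${\bf e}\in E$ (this last being exactly $\Theta'$-coprimality of ${\bf d}$ once $\Theta'({\bf d})=0$).

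For the second step, since ${\bf d}$ is indivisible, the argument in the lemma of Section~\ref{s24} shows that the sets $\{\Psi : \Psi({\bf e})=0\}$ for ${\bf e}\in E$ are proper subspaces of the space $\{\Psi\in\Lambda^*_{\bf Q} : \Psi({\bf d})=0\}$; hence there exists an integral $\Psi\in\Lambda^*$ with $\Psi({\bf d})=0$ and $\Psi({\bf e})\neq 0$ for every ${\bf e}\in E$. Now choose $N\in{\bf Z}$ with $N>\max_{{\bf e}\in E}|\Psi({\bf e})|$ and set $\Theta'=N\Theta+\Psi$. Then $\Theta'({\bf d})=0$; for ${\bf e}\in E_-$ we have $\Theta({\bf e})\leq -1$, so $\Theta'({\bf e})=N\Theta({\bf e})+\Psi({\bf e})\leq -N+|\Psi({\bf e})|<0$, and symmetrically $\Theta'({\bf e})>0$ for ${\bf e}\in E_+$; finally, for ${\bf e}\in E_0$ we get $\Theta'({\bf e})=\Psi({\bf e})\neq 0$. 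Thus $\Theta'({\bf e})\neq 0$ for all ${\bf e}\in E$, so ${\bf d}$ is $\Theta'$-coprime, and conditions (1), (2), (3) of the definition are all verified.

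I do not expect a genuine obstacle here; the only subtlety is the compatibility of the ``generic'' requirement (3) with the sign conditions (1) and (2). This is handled automatically by the splitting: on $E_0$, where $\Theta$ provides no sign information, coprimality comes from the generic perturbation $\Psi$, whereas on $E_-\cup E_+$ the integrality of the finitely many values $\Theta({\bf e})$ makes them bounded away from $0$, so for $N$ large the term $N\Theta$ dominates $\Psi$ and forces the sign of $\Theta'({\bf e})$ to equal that of $\Theta({\bf e})$ --- in particular to be nonzero --- regardless of $\Psi$. This argument also supplies the explicit construction of a generic deformation promised in Section~\ref{s3}.
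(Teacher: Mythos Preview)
Your proof is correct and follows essentially the same approach as the paper: both write $\Theta'=C\Theta+\eta$ (your $N\Theta+\Psi$) with an auxiliary linear form vanishing on ${\bf d}$, and choose the scalar large enough so that $C\Theta$ dominates on $E_-\cup E_+$ while $\eta$ breaks ties on $E_0$. The only cosmetic difference is that the paper requires $\eta({\bf e})\neq 0$ merely for ${\bf e}\in E_0$, whereas you impose it on all of $E$; this is a harmless strengthening, and your reformulation of condition~(2) via its contrapositive ($\Theta'>0$ on $E_+$) is a clean way to organize the verification.
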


\proof Since ${\bf d}$ is indivisible, there exists a stability $\eta$ such that $\eta({\bf d})=0$ and $\eta({\bf e})\not=0$ whenever $0\not={\bf e}\leq{\bf d}$ is such that $\Theta({\bf e})=0$. We choose $C\in{\bf N}$ such that $$C>\max(\max(\eta({\bf e})\, :\, {\bf e}\leq{\bf d},\, \Theta({\bf e})<0),\max(-\eta({\bf e})\, :\, {\bf e}\leq {\bf d},\, \Theta({\bf e})>0)).$$ 
We claim that $\Theta'=C\Theta+\eta$ is a generic deformation of $\Theta$:\\[1ex]
Suppose first that, for a dimension vector $0\not={\bf e}\lneqq{\bf d}$, we have $\Theta({\bf e})<0$, thus $\Theta({\bf e})\leq -1$. Then $$\Theta'({\bf e})=C\Theta({\bf e})+\eta({\bf e})\leq -C+\eta({\bf e})<0$$
by the choice of $C$. Now suppose that $\Theta'({\bf e})=0$, thus $C\Theta({\bf e})+\eta({\bf e})=0$, that is, $\eta({\bf e})=-C\Theta({\bf e})$. Then
$$C>|\eta({\bf e})|=C|\Theta({\bf e})|,$$
which implies $\Theta({\bf e})=0$. But then $\eta({\bf e})=0$, a contradiction to the choice of $\eta$. Finally, suppose that $\Theta'({\bf e})< 0$; we want to prove that $\Theta({\bf e})\leq 0$. Suppose, to the contrary, that $\Theta({\bf e})\geq 1$. Then
$$0\geq\Theta'({\bf e})=C\Theta({\bf e})+\eta({\bf e})\geq C+\eta({\bf e}),$$
that is, $\eta({\bf e})\leq -C$, in contradiction to the choice of $C$. The claim is proved.\\[1ex]
The following statement is closely related to a result of \cite[Section 2]{Stienstra}:

\begin{proposition} The restriction of the Euler form $\langle\_,\_\rangle$ of $Q$ to ${\rm Ker}(\Theta)$ is symmetric if and only if there exists a linear form $\eta\in\Lambda_{\bf Q}^*$ such that the antisymmetrized Euler form $\{\_,\_\}$ is given by
$$\{{\bf d},{\bf e}\}=\eta({\bf d})\Theta({\bf e})-\Theta({\bf d})\eta({\bf e})$$
for all ${\bf d},{\bf e}\in\Lambda^+$.
\end{proposition}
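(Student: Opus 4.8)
The plan is to prove the two implications separately, treating the "if" direction first since it is purely formal. Suppose such an $\eta$ exists. The Euler form decomposes as $\langle\_,\_\rangle = s(\_,\_) + \frac{1}{2}\{\_,\_\}$ where $s$ is its symmetrization; so the restriction of $\langle\_,\_\rangle$ to any subspace is symmetric if and only if $\{\_,\_\}$ vanishes on that subspace. If ${\bf d},{\bf e}\in{\rm Ker}(\Theta)$, then $\{{\bf d},{\bf e}\}=\eta({\bf d})\Theta({\bf e})-\Theta({\bf d})\eta({\bf e})=0$ immediately, so the restriction to ${\rm Ker}(\Theta)$ is symmetric. (One should be slightly careful that $\Lambda^+$ spans $\Lambda_{\bf Q}$, so a bilinear identity verified on $\Lambda^+$ holds on all of $\Lambda_{\bf Q}$; this is clear since $\Lambda^+={\bf N}Q_0$ contains the basis $Q_0$.)

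For the converse, assume $\{\_,\_\}$ vanishes on ${\rm Ker}(\Theta)$; I want to produce $\eta$. Since $\{\_,\_\}$ is an alternating bilinear form on $\Lambda_{\bf Q}$ that vanishes on the hyperplane (or whole space, if $\Theta=0$) ${\rm Ker}(\Theta)$, its radical contains ${\rm Ker}(\Theta)$, which has codimension at most $1$. An alternating form has even rank, so a form whose radical has codimension $\leq 1$ must have rank $0$: indeed rank $1$ is impossible for an alternating form, and rank $\geq 2$ would force the radical to have codimension $\geq 2$. Hence either $\{\_,\_\}\equiv 0$ — in which case $\eta=0$ works — or $\Theta=0$ is impossible and we are in the case $\Theta\neq 0$ with $\{\_,\_\}$ of rank exactly $2$, with radical exactly ${\rm Ker}(\Theta)$. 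In the latter case, fix any ${\bf d}_0$ with $\Theta({\bf d}_0)=1$; then every ${\bf d}$ can be written as ${\bf d}=\Theta({\bf d}){\bf d}_0 + {\bf k}$ with ${\bf k}\in{\rm Ker}(\Theta)$, and since ${\bf k}$ lies in the radical, $\{{\bf d},{\bf e}\}=\Theta({\bf d})\Theta({\bf e})\{{\bf d}_0,{\bf d}_0\}+\Theta({\bf d})\{{\bf d}_0,{\bf e}-\Theta({\bf e}){\bf d}_0\}+\dots$; unwinding, $\{{\bf d},{\bf e}\}$ depends only on the ${\rm Ker}(\Theta)$-components, so there is a well-defined alternating form on the $1$-dimensional-modulo-radical quotient — wait, that is what rank $2$ means. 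Concretely: define $\eta$ by $\eta({\bf e}) = \{{\bf d}_0,{\bf e}\}$ (where again $\Theta({\bf d}_0)=1$). Then I claim $\{{\bf d},{\bf e}\}=\eta({\bf d})\Theta({\bf e})-\Theta({\bf d})\eta({\bf e})$: writing ${\bf d}=\Theta({\bf d}){\bf d}_0+{\bf k}$, ${\bf e}=\Theta({\bf e}){\bf d}_0+{\bf k}'$ with ${\bf k},{\bf k}'\in{\rm Ker}(\Theta)$, bilinearity plus the fact that ${\bf k},{\bf k}'$ are in the radical gives $\{{\bf d},{\bf e}\}=\Theta({\bf d})\{{\bf d}_0,{\bf k}'\}-\Theta({\bf e})\{{\bf d}_0,{\bf k}\}$, while $\eta({\bf d})=\{{\bf d}_0,{\bf k}\}$ and $\eta({\bf e})=\{{\bf d}_0,{\bf k}'\}$ (using $\{{\bf d}_0,{\bf d}_0\}=0$), so $\eta({\bf d})\Theta({\bf e})-\Theta({\bf d})\eta({\bf e})=\Theta({\bf e})\{{\bf d}_0,{\bf k}\}-\Theta({\bf d})\{{\bf d}_0,{\bf k}'\}$ — which is the negative of what I want, so I should instead set $\eta({\bf e})=-\{{\bf d}_0,{\bf e}\}$, or equivalently swap the roles in the displayed formula; this is just a sign bookkeeping matter.

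The main (and only real) obstacle is the structural fact that an alternating bilinear form whose radical has codimension at most one must be identically zero, i.e. the even-rank phenomenon; everything else is linear algebra bookkeeping with the splitting $\Lambda_{\bf Q}={\bf Q}{\bf d}_0\oplus{\rm Ker}(\Theta)$ and keeping track of which slot carries the $\Theta$ and which the $\eta$. I would also remark, to connect with the cited work of Stienstra, that this $\eta$ is exactly the "linearization" data making $(\Theta,\eta)$ behave like a symplectic-type pair, and that when $\{\_,\_\}\not\equiv 0$ the pair $(\Theta,\eta)$ is determined up to the obvious $GL_2({\bf Q})$-ambiguity acting on $\mathrm{span}(\Theta,\eta)\subset\Lambda^*_{\bf Q}$ — but this last point is not needed for the statement as given and I would leave it as a remark.
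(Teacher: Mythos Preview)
Your final argument—choosing ${\bf d}_0$ with $\Theta({\bf d}_0)=1$ and setting $\eta(\cdot)=\{\cdot,{\bf d}_0\}$ (after your sign fix)—is exactly the paper's approach: the paper picks a basis $v_1,\dots,v_{n-1}$ of ${\rm Ker}(\Theta)$, extends by $v_n$ with $\Theta(v_n)=1$, defines $\eta(v_i)=\{v_i,v_n\}$, and verifies the identity on basis elements. So the substantive content of your proof matches the paper's.

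However, your intermediate claim that ``its radical contains ${\rm Ker}(\Theta)$'' is false and should be removed. Vanishing of $\{\_,\_\}$ on ${\rm Ker}(\Theta)\times{\rm Ker}(\Theta)$ does not say that every ${\bf k}\in{\rm Ker}(\Theta)$ satisfies $\{{\bf k},{\bf v}\}=0$ for all ${\bf v}$; for instance on ${\bf Q}^2$ with $\{e_1,e_2\}=1$ and $\Theta=e_2^*$, the form vanishes trivially on the line ${\rm Ker}(\Theta)=\langle e_1\rangle$, yet $e_1$ is not in the radical. Consequently the whole rank discussion (``radical has codimension $\le 1$, hence rank $0$'') is unsupported. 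Fortunately your actual expansion of $\{{\bf d},{\bf e}\}$ via ${\bf d}=\Theta({\bf d}){\bf d}_0+{\bf k}$, ${\bf e}=\Theta({\bf e}){\bf d}_0+{\bf k}'$ does not use the radical claim at all: it only needs $\{{\bf d}_0,{\bf d}_0\}=0$ (alternating) and $\{{\bf k},{\bf k}'\}=0$ (the hypothesis), both of which hold. So after excising the radical/rank digression and stating the correct justification for the expansion, your proof is complete and coincides with the paper's.
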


\proof Assume that the antisymmetrized Euler form vanishes on ${\rm Ker}(\Theta)$. If $\Theta=0$, there is nothing to prove. For $\Theta=\sum_i\Theta_i i^*\not=0$ we can assume, without loss of generality, that ${\rm gcd}_i(\Theta_i)=1$. Choose a ${\bf Q}$-basis $v_1,\ldots,v_{n-1}$ of ${\rm Ker}(\Theta)\subset\Lambda_{\bf Q}$, and extend it to a ${\bf Q}$-basis of $\Lambda_{\bf Q}$ by a vector $v_n$ such that $\Theta(v_n)=1$. We define $\eta\in\Lambda_{\bf Q}^*$ by $\eta(v_i)=\{v_i,v_n\}$ for $i=1,\ldots,n$. It is immediately verified that 
$$\{{v_i},{ v_j}\}=\eta({ v_i})\Theta({ v_j})-\Theta({ v_i})\eta({v_j})$$
for all $i,j=1,\ldots,n$, thus the claimed identity holds for all ${\bf d},{\bf e}\in\Lambda$. The converse implication is trivial.

\begin{corollary} For a given quiver $Q$, there exists a stability $\Theta\in\Lambda^*$ such that the Euler form of $Q$ is symmetric on ${\rm Ker}(\Theta)$ if and only if the antisymmetrized Euler form of $Q$ is of rank at most two. In particular, this is the case if $Q$ is a symmetric quiver, a complete bipartite quiver, or a quiver with at most three vertices.
\end{corollary}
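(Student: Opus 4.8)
The plan is to combine the preceding Proposition with an elementary linear-algebra fact about antisymmetric bilinear forms. The Proposition tells us that the Euler form of $Q$ is symmetric on ${\rm Ker}(\Theta)$ for some $\Theta\in\Lambda^*$ precisely when there is an $\eta\in\Lambda_{\bf Q}^*$ with $\{{\bf d},{\bf e}\}=\eta({\bf d})\Theta({\bf e})-\Theta({\bf d})\eta({\bf e})$ for all ${\bf d},{\bf e}$. So the task reduces to characterizing exactly which antisymmetric bilinear forms on $\Lambda_{\bf Q}$ can be written in this ``rank two wedge'' shape, namely as $\eta\wedge\Theta$ for two linear functionals. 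The first step is therefore to observe that a form of the shape $\eta({\bf d})\Theta({\bf e})-\Theta({\bf d})\eta({\bf e})$ has rank $0$ (if $\eta,\Theta$ are proportional, in particular if one of them vanishes) or rank $2$ (otherwise): in suitable coordinates its Gram matrix is the standard $2\times 2$ symplectic block padded by zeros. Hence the existence of such a $\Theta$ forces the antisymmetrized Euler form to have rank $\leq 2$.

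For the converse, I would argue that any antisymmetric bilinear form $\{\_,\_\}$ on a finite-dimensional ${\bf Q}$-vector space of rank at most $2$ is decomposable, i.e. of the form $\eta\wedge\Theta$ for some $\eta,\Theta\in\Lambda_{\bf Q}^*$. If the rank is $0$ the form is zero and one takes $\Theta=0$ (so that ${\rm Ker}(\Theta)=\Lambda_{\bf Q}$ and the Euler form is vacuously symmetric on it, as $\{\_,\_\}\equiv 0$ means the Euler form is already symmetric everywhere). If the rank is exactly $2$, pick ${\bf d}_0,{\bf e}_0$ with $\{{\bf d}_0,{\bf e}_0\}\neq 0$; then $\Theta:={\bf e}\mapsto\{{\bf d}_0,{\bf e}\}$ and $\eta:={\bf e}\mapsto\{{\bf e}_0,{\bf e}\}$ (suitably normalized) span the image of the map $\Lambda_{\bf Q}\to\Lambda_{\bf Q}^*$ induced by $\{\_,\_\}$, which by the rank hypothesis is two-dimensional, and one checks on a symplectic basis that $\{\_,\_\}=\eta\wedge\Theta$ up to scalar, which one absorbs. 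There is the minor wrinkle that the Proposition is stated with $\Theta\in\Lambda^*$ (integral), but clearing denominators in $\eta,\Theta$ changes $\{\_,\_\}$ only by a positive rational scalar, which does not affect its kernel nor the symmetry of the Euler form on that kernel; and replacing $\Theta$ by an integral multiple leaves ${\rm Ker}(\Theta)$ unchanged. Then the Proposition applied in this direction yields the desired $\Theta$.

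For the ``in particular'' clause I would simply exhibit, in each of the three cases, why the antisymmetrized Euler form has rank $\leq 2$. If $Q$ is symmetric then the Euler form itself is symmetric, so $\{\_,\_\}=0$ has rank $0$. If $Q$ has at most three vertices, then $\{\_,\_\}$ is an antisymmetric form on a space of dimension $\leq 3$; an antisymmetric form has even rank, so its rank is $0$ or $2$, hence $\leq 2$. For a complete bipartite quiver on vertex sets of sizes $a$ and $b$, all arrows go from the first part to the second (say), so the antisymmetrized Euler form pairs the coordinate functionals as $\{i,j\}=-1$ for $i$ in the source part and $j$ in the target part, and $\{i,j\}=0$ within each part; this is exactly $\sigma\wedge\tau$ where $\sigma$ sums the source coordinates and $\tau$ sums the target coordinates, hence again rank $\leq 2$. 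The main obstacle in all of this is really just the converse decomposability step — making sure that ``rank $\leq 2$ antisymmetric form'' genuinely implies ``decomposable as a wedge of two functionals'' over ${\bf Q}$ — but this is standard (it is the classical fact that a $2$-form is decomposable iff it has rank $\leq 2$, and over any field), so I expect no serious difficulty, only the bookkeeping of passing between $\Lambda^*$ and $\Lambda_{\bf Q}^*$ and between a form and its kernel.
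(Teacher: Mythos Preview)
Your proposal is correct and supplies exactly the details the paper leaves implicit: the Corollary is stated there without proof, as an immediate consequence of the preceding Proposition, and your argument (that an antisymmetric bilinear form over ${\bf Q}$ has rank at most two if and only if it is decomposable as $\eta\wedge\Theta$, together with the routine integrality adjustment and the case checks for symmetric, bipartite, and $\leq 3$-vertex quivers) is precisely the standard justification. There is nothing to compare against beyond this.
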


\section{Examples}\label{s6}

In this final section, we discuss various classes of GIT quotients to which our methods apply.

\subsection{Classical invariant theory and determinantal varieties}\label{s61}

Consider the symmetric quiver $Q$ with two vertices $i$ and $j$ and $m\geq 1$ arrows from $i$ to $j$ as well as from $j$ to $i$. Let ${\bf d}=i+rj$ for $0\leq r\leq m$, and consider $\Theta=0$.\\[1ex]
A representation of $Q$ of dimension vector ${\bf d}$ is thus given by an $m$-tuple of vectors $v_1,\ldots,v_m$ in ${\bf C}^d$, together with an $m$-tuple of covectors $\varphi_1,\ldots,\varphi_m\in({\bf C}^d)^*$. On this data, the group ${\rm GL}_d({\bf C})$ acts via the natural action, and an additional ${\bf C}^*$ acts by dilation. By classical invariant theory, the ring of invariants is thus generated by the functions $\varphi_i(v_j)$ for $i,j=1,\ldots,m$. More geometrically, we have
$$M_{\bf d}^{\rm ssimp}(Q)\simeq M_{m\times m}^{\leq r}({\bf C})=\{A\in M_{m\times m}({\bf C})\, :\, {\rm Rank}(A)\leq r\},$$
the determinantal variety of $m\times m$-matrices of rank at most $r$. As long as $r<m$, this is a singular affine variety, with singular locus consisting of the matrices of rank less than $r$.\\[1ex]
It is obvious from the definitions that $\Theta'=ri^*-j^*$ defines a generic deformation of $\Theta=0$. The ring of semi-invariants with respect to this stability is generated, over the ring invariants, by the maximal minors of the $r\times m$-matrix $[v_1|\ldots|v_m]$. Viewing these as the Pluecker coordinates for the subspace $\langle v_1,\ldots,v_m\rangle\subset {\bf C}^r$, we see that
$$M_{\bf d}^{\Theta'-sst}(Q)\simeq Y=\{(A,U)\subset M_{m\times m}({\bf C})\times{\rm Gr}_r({\bf C}^m)\, :\, {\rm Im}(A)\subset U\}.$$
The desingularization $p$ then identifies with the canonical projection $Y\rightarrow M_{m\times m}^{\leq r}({\bf C})$, which is thus small.\\[1ex]
The ${\rm GL}_m({\bf C})$-equivariant canonical projection $Y\rightarrow {\rm Gr}_r({\bf C}^m)$ realizes $Y$ as a homogeneous bundle of rank $mr$ over ${\rm Gr}_r({\bf C}^m)$, thus we arrive at an identification
$${\rm IC}_c^\bullet(M_{m\times m}({\bf C})^{\leq r})\simeq H^\bullet({\rm Gr}_r({\bf C}^m))[2mr].$$

\subsection{Ordered point configurations in projective space}\label{s62}

Consider the quiver $Q$ with set of vertices $\{i_1,\ldots,i_m,j\}$ and one arrow from $i_k$ to $j$ for $k=1,\ldots,m$. Let the dimension vector ${\bf d}$ be given by $d_{i_k}=1$ for all $k$ and $d_{j}=d\geq 2$. We consider the stability respecting the natural $S_m$-symmetry, given as $\Theta=d\sum_{k=1}^mi_k^*-mj^*$.\\[1ex]
A representation of $Q$ of dimension vector ${\bf d}$ thus consists of a tuple $(v_1,\ldots,v_m)$ of vectors in ${\bf C}^d$, which are considered up to the natural diagonal action of ${\rm GL}_d({\bf C})$ and up to the action of an $m$-torus scaling the vectors.\\[1ex]
Such a tuple $(v_1,\ldots,v_m)$ is $\Theta$-semistable if, for every subset $I$ of $\{1,\ldots,m\}$, the span
$$U_I=\langle v_k\, :\, k\in I\rangle$$
fulfills the dimension estimate
$$\dim U_I\geq\frac{d}{m}|I|.$$

The tuple is stable if this estimate is strict for every non-empty proper subset $I$.\\[1ex]
The moduli space $M_{\bf d}^{\Theta-\rm sst}(Q)$ can thus be identified with the moduli space
$$({\bf P}^{d-1})^m_{\rm sst}//{\rm PGL}_d({\bf C})$$
of stable ordered configurations of $m$ points in ${\bf P}^{d-1}$, where stability is defined by the above dimension estimates.\\[1ex]
To find a generic deformation of the stability $\Theta$, let $g={\rm gcd}(d,m)$ and write $d=ge$, $m=gn$. For a subset $I\subset\{1,\ldots,m\}$ and $l\leq d$, define the dimension vector ${\bf d}(I,l)=\sum_{k\in I}i_k+lj$, thus $\Theta({\bf d}(I,l))=d|I|-ml$, which allows to determine the set of dimension vectors ${\bf e}\leq{\bf d}$ for which $\Theta({\bf e})>0$ or $\Theta({\bf e})\geq 0$. We can then exhibit a deformed stability for example by applying the construction in the proof of Theorem \ref{theorem51} with $\eta=di_1^*-j^*$ and $C=d$. Thus
$$\Theta'=(d^2+d)i_1^*+ \sum_{k=2}^md^2i_k^*-(md+1)j^*$$
is a generic deformation of $\Theta$. A tuple $(v_1,\ldots,v_m)$ is then $\Theta'$-(semi-)stable if and only if the previous dimension estimate $\dim U_I\geq d/m|I|$ is fulfilled for every subset $I$, and it is strict whenever $1\in I$.\\[1ex]
To determine the possible local quivers $Q_\xi$, we note that the dimension vectors ${\bf e}\leq{\bf d}$ of subrepresentations fulfilling $\Theta({\bf e})=0$ are those of the form ${\bf e}={\bf d}(I,\frac{e}{n}|I|)$, where $I$ is a subset of $\{1,\ldots,m\}$ whose cardinality is divisible by $n$. The possible decomposition types of ${\bf d}$ are therefore parametrized by decompositions of $\{1,\ldots,m\}$ into pairwise disjoint subsets $I_1,\ldots,I_s$ all of whose cardinalities are divisible by $n$. We associate to this decomposition type a marked partition $\dot{\lambda}$ of $g$ with parts $|I_k|/n$ for $k=1,\ldots,s$ and marked part $|I_1|/n$.\\[1ex]
It is then easy to see that the corresponding local quivers only depend on the marked partition: $Q_{\dot{\lambda}}$ has vertices $v_1,\ldots,v_s$, and the number of arrows from $v_p$ to $v_q$ equals $e(e-n)\lambda_p\lambda_q$; the stability $\Theta_{\dot{\lambda}}$ is defined by $\Theta_{\dot{\lambda}}(\lambda_p)=d-{e}\lambda_p$ for the marked part $\lambda_p$ and $\Theta_{\dot{\lambda}}(\lambda_q)=-e\lambda_q$ for all other parts $\lambda_q$. For this quiver with stability, we consider the dimension vector ${\bf 1}=\sum_pv_p$.\\[1ex]
We conclude that all fibres of the desingularization map are of the form
$$M_{{\bf 1}}^{\Theta_{\dot{\lambda}}-\rm sst}(Q_{\dot{\lambda}}).$$

\subsection{Levi invariants in adjoint actions}\label{s63}

Consider a finite-dimensional complex vector spaces $V$ with a fixed direct sum decomposition $V=\bigoplus_{p=1}^lV_p$. Consider the adjoint action of ${\rm GL}(V)$ on its Lie algebra $\mathfrak{gl}(V)$ and the Levi subgroup $L=\prod_{p=1}^{l}{\rm GL}(V_p)$ of ${\rm GL}(V)$. We are interested in the action of $L$ on $\mathfrak{gl}(V)$ given by restriction of the adjoint action.\\[1ex]
Define $Q$ as the quiver with $l=l(\lambda)$ vertices $i_1,\ldots,i_{l}$, and one arrow from $i_p$ to $i_q$ for all $p,q=1,\ldots,l$. Define the dimension vector ${\bf d}=\sum_{p=1}^l\dim V_p i_p$, and consider the trivial stability $\Theta=0$. Then $$M_{\bf d}^{\rm ssimp}(Q)\simeq \mathfrak{gl}(V)//L.$$

To obtain a small desingularisation, we restrict to the case ${\rm gcd}_p(\dim V_p)=1$. A stability $\Theta'$ such that $\sum_p\Theta'_p\dim V_p=0$ is generic if and only if $\sum_p\Theta'_p a_p\not=0$ for every tuple $(0\leq a_p\leq\dim V_p)_p$ which is neither identically zero nor equal to $(\dim V_p)_p$. For every such $\Theta'$, the moduli space $M_{\bf d}^{\Theta'-\rm sst}(Q)$ yields a small desingularization of $\mathfrak{gl}(V)//L$.\\[1ex]
We consider the particular case $\dim V_p=1$ for all $p$. Then $\Theta'$ is a generic stability if and only if $\sum_p\Theta'_p=0$ and $\sum_{p\in I}\Theta'_p\not=0$ for every proper non-empty subset $I\subset\{1,\ldots,l\}$. A particular choice of such a stability is provided by, for example, $\Theta'=(l-1)i_1^*-\sum_{p=2}^li_p^*$.\\[1ex]
We are thus considering the quotient of $\mathfrak{gl}(V)$ by a maximal torus $T$ of ${\rm GL}(V)$. We fix a basis consisting of generators of the $V_p$ and identify $\mathfrak{gl}(V)$ with $\mathfrak{gl}_n({\bf C})$ and $L$ with the group of diagonal matrices. A matrix $A\in\mathfrak{gl}_n({\bf C})$ is then $\Theta'$-semistable if and only if, for every $p=1,\ldots,n$ there exists a sequence $1=p_1,p_2,\ldots,p_s=p$ such that the matrix elements
$$A_{p_1,p_2},A_{p_2,p_3},\ldots,A_{p_{s-1},p_s}$$
are all non-zero. In other words, defining the support graph of $A$ as the oriented graph on $\{1,\ldots,n\}$ with arrows $p\rightarrow q$ whenever $A_{p,q}\not=0$, a matrix $A$ is $\Theta'$-semistable if and only if its support graph contains a spanning tree with root $1$.\\[1ex]
As a particular example, consider the case $l=3$, and write
$$A=\left[\begin{array}{lll}a&b&c\\ d&e&f\\ g&h&i\end{array}\right]\in\mathfrak{gl}_3({\bf C}).$$
Then the invariant ring is generated by
$$a,e,i,x=bd,y=cg,z=fh,v=bfg,w=cdh,$$
with the single defining relation $vw=xyz$; the moduli space is thus a cubic hypersurface in ${\bf A}^8$. The desingularization is given by affine cordinates $a,e,i,x,y,z,v,w$ and homogeneous coordinates $F_0,F_1,F_2$, subject to the relations
$$F_0v=F_1z,\; F_0w=F_2=x,\; F_1F_2=F_0^2y.$$
This is a rank $5$ bundle over ${\bf P}^2$, so the compactly supported intersection cohomology of $\mathfrak{gl}_3({\bf C})//({\bf C}^*)^3$ is isomorphic to $H^\bullet({\bf P}^2)[10]$.

\subsection{Levi invariants for graded linear maps}\label{s64}

Similarly to the previous example, fix two finite-dimensional complex vector spaces $V$ and $W$ with direct sum decompositions $V=\bigoplus_{p=1}^lV_p$, $W=\bigoplus_{q=1}^lW_q$. Consider the natural action of ${\rm GL}(V)\times{\rm GL}(W)$ on ${\rm Hom}(V,W)$ and the Levi subgroup $L=\prod_p{\rm GL}(V_p)\times\prod_q{\rm GL}(W_q)\subset {\rm GL}(V)\times{\rm GL}(W)$. We are interested in the action of $L$ on ${\rm Hom}(V,W)$ given by restriction of the above action.\\[1ex]
Let $Q$ be the complete bipartite quiver with set of vertices $\{i_1,\ldots,i_k,j_1,\ldots,j_l\}$ and arrows $\alpha_{p,q}:i_p\rightarrow j_q$ for $p=1,\ldots,k$, $q=1,\ldots,l$. We define a dimension vector ${\bf d}$ by $${\bf d}=\sum_{p=1}^k\dim V_p i_p+\sum_{q=1}^l\dim W_qj_q.$$

A representation of $Q$ of dimension vector ${\bf d}$ thus consists of a collections of linear maps $f_{p,q}:V_p\rightarrow W_q$ for $p=1,\ldots,k$, $q=1,\ldots,l$, which can be assembled into the element
$$\bigoplus_{p,q}f_{p,q}\in{\rm Hom}(V,W).$$

Let $\Theta$ be the stability given by $\Theta(i_p)=\dim W$ for all $p$ and $\Theta(j_q)=-\dim V$ for all $q$. A representation given by linear maps $(f_{p,q})_{p,q}$ is then $\Theta$-semistable (resp.~$\Theta$-stable) if and only if, for all non-trivial tuples of subspaces $(U_p\subset V_p)_p$, we have
$$\sum_q\dim\sum_pf_{p,q}U_p\geq\frac{\dim W}{\dim V}\sum_p\dim U_p$$
(resp.~strict inequality).\\[1ex]
Defining ${\rm Hom}(V,W)^{\rm (s)st}$ as the corresponding locus in ${\rm Hom}(V,W)$, we have thus, by definition, an identification
$$M_{\bf d}^{\Theta-\rm sst}(Q)\simeq{\rm Hom}(V,W)^{\rm sst}//L.$$

The assumption that the restriction of the Euler form to the kernel of the stability is symmetric is fulfilled: the antisymmetrized Euler form of $Q$ is given by
$$\{{\bf e},{\bf f}\}=(\sum_pf_{i_p})(\sum_q e_{j_q})-(\sum_{p}e_{i_p})(\sum_q f_{j_q}),$$
thus vanishes for ${\bf e},{\bf f}\in{\rm Ker}(\Theta)$ since
$$\frac{\sum_pe_{i_p}}{\sum_qe_{j_q}}=\frac{\dim V}{\dim W}=\frac{\sum_pf_{i_p}}{\sum_q f_{j_q}}.$$

We have thus proved: if ${\rm gcd}((\dim V_p)_p,(\dim W_q)_q)=1$ and ${\rm Hom}(V,W)^{\rm st}\not=\emptyset$, the quotient ${\rm Hom}(V,W)^{\rm sst}//L$ admits a small desingularization.

\subsection{Toric quiver moduli and abelian invariants}\label{s65}

We briefly review some concepts of \cite{MoR}. Suppose again that we are given a quiver $Q$, an arbitrary dimension vector ${\bf d}$ and a stability $\Theta$ such that $\Theta({\bf d})=0$ and such that the restriction of the Euler form to ${\rm Ker}(\Theta)$ is symmetric. We construct a new quiver $Q_{\bf d}$ with set of vertices
$$\{i_k\, :\, i\in Q_0,\, k=1,\ldots,d_i\}$$
and arrows
$$\alpha_{k,l}:i_k\rightarrow j_l\mbox{ for }\alpha:i\rightarrow j\mbox{ in }Q,\, k=1,\ldots,d_i,\, l=1,\ldots,d_j\}.$$
We define a stability $\Theta_{\bf d}$ for $Q_{\bf d}$ by $\Theta_{\bf d}(i_k)=\Theta(i)$ for all $i\in Q_0$ and $k=1,\ldots,d_i$. We consider the dimension vector ${\bf 1}$ for $Q_{\bf d}$ given as $${\bf 1}=\sum_{i\in Q_0}\sum_{k=1}^{d_i}i_k.$$
Then the moduli spaces
$$M_{\bf 1}^{\Theta_{\bf d}-\rm sst}(Q_{\bf d})$$
are toric varieties since the torus action scaling all arrows of $Q_{\bf d}$ acts with a dense orbit.\\[1ex]
They are used in \cite{MoR} for the definition of certain abelian analogues of quantized Donaldson-Thomas invariants $g_{\bf d}^{\Theta}(Q)$, which (in the case of generalized Kronecker quivers) play the role of universal coefficients in the MPS wall-crossing formula of \cite{MPS}.\\[1ex]
In \cite[Section 6]{MoR}, these invariants are identified with the Poincar\'e polynomials of certain desingularizations of $M_{\bf 1}^{\Theta_{\bf d}-\rm sst}(Q_{\bf d})$. These actually coincide with the desingularizations constructed in Section \ref{s4}, thus we find: the abelian DT invariant $g_{\bf d}^\Theta(Q)$ coincides (up to a shift) with the Poincar\'e polynomial in compactly supported intersection cohomology of $M_{\bf 1}^{\Theta_{\bf d}-\rm sst}(Q_{\bf d})$.

\subsection{Counterexamples}\label{s66}

We exhibit counterexamples showing that the assumptions of the main theorem are essential:\\[1ex]
Consider an arbitrary symmetric quiver $Q$ with two dimension vectors ${\bf e}$, ${\bf f}$ such that the following holds: there exist simple representations of dimension vectors ${\bf e}$ and ${\bf f}$, there does not exist a simple representation of dimension vector ${\bf e}+{\bf f}$, and $\langle{\bf e},{\bf f}\rangle=0$. Then a general representation of dimension vector ${\bf e}+{\bf f}$ is isomorphic to the direct sum $V\oplus W$, where ${\bf dim} V={\bf e}$ and ${\bf dim} W={\bf f}$. Thus any such representation $V\oplus W$ is unstable for every choice of stability $\Theta$ such that $\Theta({\bf e})\not=0$, which implies $M_{{\bf e}+{\bf f}}^{\Theta-\rm sst}(Q)=\emptyset$, although $M_{{\bf e}+{\bf f}}^{\rm ssimp}(Q)\not=\emptyset$. As a concrete example, one can take the quiver $Q$ given as
$$i{{\rightarrow}\atop{\leftarrow}}j{{\rightarrow}\atop{\leftarrow}}k$$
and the dimension vectors ${\bf e}=i+j+k$ and ${\bf f}=k$.\\[1ex]
Finally we exhibit an example showing that the symmetry of the Euler form (restricted to the kernel of the stability) is essential. In generalization of the first example in Section \ref{s61}, consider the quiver with two vertices $i$ and $j$, $m$ arrows from $i$ to $j$, and $n$ arrows from $j$ to $i$. We consider the dimension vector ${\bf d}=i+j$ and the stabilites $\Theta=0$, $\Theta'=i^*-j^*$. Again we find
$$M_{\bf d}^{\rm ssimp}(Q)\simeq\{A\in M_{m\times n}({\bf C})\, :\, {\rm Rank}(A)\leq 1\}$$
and
$$M_{\bf d}^{\Theta'-\rm sst}(Q)\simeq\{(A,L)\in M_{m\times n}({\bf C})\times{\bf P}^{m-1}\, :\, {\rm Im}(A)\subset L\}.$$
The fibre of the desingularization map over the zero matrix is the projective space ${\bf P}^{m-1}$, and the dimension of the variety of matrices of rank at most one equals $m+n-1$. Thus the desingularization is not small as soon as $m>n$.

\end{document}